\journal{}
\newtheorem{assumption}{Assumption}
\newtheorem{proposition}{Proposition}
\newtheorem{corollary}{Corollary}
\newtheorem{lemma}{Lemma}
\theoremstyle{remark}
\pgfplotsset{compat=1.16}
\tikzstyle{none}=[]
\tikzstyle{decision} = [diamond, aspect=2, draw, thick, text width=8em, text badly centered, inner sep=2pt]
\tikzstyle{decision2} = [diamond, aspect=3, draw, thick, text width=17em, text badly centered, inner sep=2pt]
\tikzstyle{io} = [trapezium, draw=black, thick, trapezium left angle=70, trapezium right angle=110, text width=17em, text centered]
\tikzstyle{start} = [rectangle, draw=black, thick, text width=17em, text centered, rounded corners, inner sep=5pt]
\tikzstyle{stop} = [rectangle, draw=black, thick, text width=5.5em, text centered, rounded corners, inner sep=5pt]
\tikzstyle{block} = [rectangle, draw=black, thick, text width=17em, text centered, inner sep=5pt]
\tikzstyle{block2} = [rectangle,
\tikzstyle{line} = [draw, thick, -latex]
\begin{document}

\SetBgContents{Published at \url{https://doi.org/10.1016/j.ejor.2026.07.045}}      
\SetBgPosition{current page.center}
\SetBgAngle{0}                                    
\SetBgColor{gray}                                 
\SetBgScale{1.5}                                  
\SetBgHshift{0}                                   
\SetBgVshift{9cm} 

\begin{frontmatter}

\title{Strong bounds for large-scale Minimum Sum-of-Squares Clustering}

\author{Anna Livia Croella}
\ead{annalivia.croella@unimercatorum.it}
\address{Department of Engineering and Science, \\ Universitas Mercatorum, Piazza Mattei 10, 00186, Italy}
\author{Veronica Piccialli\corref{cor1}}
\ead{veronica.piccialli@uniroma1.it}
\address{Department of Computer, Control and Management Engineering ''Antonio Ruberti'', \\ Sapienza University of Rome, Via Ariosto 25, 00185, Italy}
\author{Antonio M. Sudoso}
\ead{antoniomaria.sudoso@uniroma1.it}
\address{Department of Computer, Control and Management Engineering ''Antonio Ruberti'', \\ Sapienza University of Rome, Via Ariosto 25, 00185, Italy}
\cortext[cor1]{Corresponding author}

\begin{abstract}
Clustering is a fundamental technique in data analysis and machine learning, used to group similar data points together. Among various clustering methods, the Minimum Sum-of-Squares Clustering (MSSC) is one of the most widely used. MSSC aims to minimize the total squared Euclidean distance between data points and their corresponding cluster centroids. Due to the unsupervised nature of clustering, achieving global optimality is crucial but computationally challenging. The complexity of finding the global solution increases exponentially with the number of data points, making exact methods impractical on large-scale datasets. Even obtaining strong lower bounds on the optimal MSSC objective value is computationally prohibitive, making it difficult to assess the quality of heuristic solutions.
We address this challenge by introducing a novel method to validate heuristic MSSC solutions through optimality gaps. Our approach employs a divide-and-conquer strategy, decomposing the problem into smaller instances that can be handled by an exact solver. The decomposition is guided by an auxiliary optimization problem, the ``anticlustering problem", for which we design an efficient heuristic. Computational experiments demonstrate the effectiveness of the method for large-scale instances, achieving optimality gaps below 3\% while maintaining reasonable computational times. These results highlight the practicality of our approach in assessing feasible clustering solutions for large datasets, bridging a critical gap in MSSC evaluation.
\end{abstract}


\begin{keyword}
Machine Learning \sep Minimum sum-of-squares clustering \sep Large scale optimization \sep Clustering validation
\end{keyword}

\end{frontmatter}

\newpage

\section{Introduction}
Clustering is a powerful data analysis tool with applications across various domains. It addresses the problem of identifying homogeneous and well-separated subsets, called \emph{clusters}, from a given set \(O = \{p_1, \ldots, p_N\}\) of \(N\) data observations, each with \(D\) attributes, often referred to as the data dimension \citep{rao1971cluster, hartigan1979algorithm}. Homogeneity means that data within the same cluster should be similar, while separation implies that data observations in different clusters should be significantly different. Clustering is used in fields such as image segmentation, text mining, customer segmentation, and anomaly detection \citep{jain1999data}.
In many application domains, clustering is often used as a preprocessing step to support downstream optimization tasks \citep{croella2024}.
The most common type of clustering is partitioning, where we seek a partition \(\mathcal{P} = \{C_1, \ldots, C_K\}\) of \(O\) into \(K\) clusters such that:
\begin{enumerate}
    \item[(i)] \(C_j \neq \emptyset, \quad \forall j=1,\ldots,K\);
    \item[(ii)] \(C_i \cap C_j = \emptyset, \quad \forall i,j=1,\ldots,K\) and \(i \neq j\);
    \item[(iii)] \(\bigcup\limits_{j=1}^{K} C_j = O\).
\end{enumerate}
Clustering methods aim to optimize a given clustering criterion to find the best partition \citep{rao1971cluster}. For that purpose, they explore the exponential solution space of assigning data to clusters.
Let \({P}(O, K)\) denote the set of all \(K\)-partitions of \(O\). Clustering can be viewed as a mathematical optimization problem, where the clustering criterion \(f: {P}(O, K) \rightarrow \mathbb{R}\) defines the optimal solution for the clustering problem, expressed as:
\begin{equation} \label{clu_opt}
    \min \ \{f(\mathcal{P}): \mathcal{P} \in {P}(O, K)\}.
\end{equation}
The choice of the criterion \(f\) significantly affects the computational complexity of the clustering problem and is selected based on the specific data mining task. Among many criteria used in cluster analysis, the most intuitive and frequently adopted criterion is the minimum sum-of-squares clustering (MSSC) or $k$-means type clustering \citep{gambella2021optimization}, which is given by
\begin{equation}
    f(\mathcal{P})\ = \sum_{j=1}^K\sum_{i: p_i \in C_{j}} \| p_i - \mu_{j} \|^2,
\end{equation}
where \(\|\cdot\|\) is the Euclidean norm and \(\mu_j\) is the cluster center of the points \(p_i\) in cluster \(C_j\). 

The standard MSSC formulation is a Mixed-Integer Nonlinear Programming (MINLP) problem, which can be expressed as:
\begin{subequations}
\label{eq:MSSCcentroid}
\begin{align}
\textrm{MSSC}(O, K) = \min \ & \sum_{i=1}^N \sum_{j=1}^K \delta_{ij}\left\|p_i - \mu_j\right\|^2  \\
\textrm{s.\,t.} \ & \sum_{j=1}^K \delta_{ij} = 1, \quad \forall i \in [N],\label{eq:MSSCsingle}\\
& \sum_{i=1}^N \delta_{ij} \geq 1, \quad \forall j \in [K],\label{eq:MSSCempty}\\
& \delta_{ij} \in \{0, 1\}, \quad \forall i \in [N], \ \forall j \in [K],\\
& \mu_j \in \mathbb{R}^D, \quad \forall j \in [K],
\end{align}
\end{subequations}
where the notation $[Q]$ with any integer $Q$ denotes the set of indices $\{1,\ldots,Q\}$. The binary decision variable $\delta_{ij}$ expresses whether a data point $i$ is assigned to cluster $j$ ($\delta_{ij}=1)$ or not ($\delta_{ij}=0$). The objective function minimizes the sum of the squared Euclidean distances between the data points and the corresponding cluster centers $\mu_j$. Constraints ~\eqref{eq:MSSCsingle} make sure that each point is assigned to exactly one cluster and constraints ~\eqref{eq:MSSCempty} ensure that there are no empty clusters. Problem \eqref{eq:MSSCcentroid} is a nonconvex MINLP. 
Although not explicitly expressed in the formulation, the centers of the $K$ clusters  $\mu_j$, for $j\in [K]$, are located at the centroids of the clusters due to first-order optimality conditions on the $\mu_j$ variables.

The MSSC is known to be NP-hard in \(\mathbb{R}^2\) for general values of \(K\) \citep{mahajan2012planar} and in higher dimensions even for \(K=2\) \citep{aloise2009complexity}. The one-dimensional case is solvable in polynomial time, with an \(\mathcal{O}(KN^2)\) time and \(\mathcal{O}(KN)\) space dynamic programming algorithm \citep{wang2011ckmeans}. 
Achieving global optimality in MSSC is notoriously challenging in practice. The computational complexity grows exponentially with the number of data points, making exact solutions impractical for large-scale datasets. Consequently, many clustering algorithms resort to heuristic or approximate methods that offer no guarantees about the quality of the solution. However, clustering is a data mining task that requires global optimality. Unlike supervised methods, where class labels are known and many performance measures exist (e.g. precision, recall, F-score), clustering performance and validation are carried out \textit{internally} by optimizing a mathematical function as in~(\ref{clu_opt}) based only on the inter- and intra-cluster relations between the given data observations, or \emph{externally}, by measuring how close a given clustering solution is to a ground-truth solution for a specific data set. However, external validation is limited by the nonexistence of ground-truth solutions, which is why clustering is performed. Since clustering does not rely on side information, its outcome typically requires interpretation by domain experts. Poor-quality heuristic clustering can lead to incorrect interpretations and potentially serious consequences in fields such as medicine and finance.

\newpage

\paragraph{Contributions}
The impossibility of finding global solutions for large-scale instances poses significant challenges in applications where the quality of the clustering directly impacts downstream tasks, such as image segmentation, market segmentation, and bioinformatics. In particular, given the current state-of-the-art solvers, computing exact solutions for MSSC is out of reach beyond a certain instance size. Moreover, for large-scale datasets, even obtaining good lower bounds on the optimal MSSC objective value is prohibitive, making it impossible to assess the quality of heuristic solutions. This work is the first to fill this gap by introducing a method to validate heuristic clustering solutions.
Our approach is based on a divide-and-conquer strategy that decomposes the problem into small- and medium-sized instances that can be handled by exact solvers as computational engines.  
Importantly, our method does not require the optimal clustering as input: it can validate any feasible solution by computing a lower bound and the corresponding optimality gap, making it applicable to large-scale settings where exact optimization is infeasible.

Overall, we provide the following contributions:
\begin{itemize}

    \item We propose a scalable procedure to compute strong lower bounds on the optimal MSSC objective value. 
    These bounds allow the computation of certified optimality gaps for any feasible solution and provide a quantitative measure of solution quality, which can be used as a stopping criterion in heuristic frameworks. The approach relies on decomposing the original problem into smaller, tractable subproblems guided by an auxiliary ``anticlustering'' problem.

    \item We present a theoretical analysis of the quality of the lower bound, clarifying the conditions under which it is tight, and discussing the practical implications for algorithm design.
    
    \item We design an efficient heuristic for the  ``anticlustering problem'', and use it to validate heuristic MSSC solutions through certified optimality gaps.
    
    \item We demonstrate through extensive experiments that the method provides small optimality gaps for large-scale instances. The computed gaps are below 3\% in all but one case (around 4\%), with many instances showing much smaller gaps, while keeping computational times reasonable (below two hours on average).
    
\end{itemize}

The rest of this paper is organized as follows. Section \ref{sec:lit} reviews the literature related to the MSSC problem, describing heuristics and exact methods. Section \ref{sec:definition} provides a theoretical analysis of the quality of the lower bound and discusses its practical use. Section \ref{sec:avoc} illustrates the algorithm for validating a feasible MSSC solution. Section \ref{sec:results} provides computational results. Finally, Section \ref{sec:conclusions} concludes the paper and discusses research directions for future work.

\section{Literature review} \label{sec:lit}

The most popular heuristic to optimize MSSC is the famous $k$-means algorithm \citep{macqueen1967some, lloyd1982least} ($\approx$ 3M references in Google Scholar 2025). It starts from an initial partition of $O$, so that each data point is assigned to a cluster. In the sequel, the centroids are computed and each point is then assigned to the cluster whose centroid is closest to it. If there are no changes in assignments, the heuristic stops. Otherwise, the
centroids are updated, and the process is repeated. The main drawback of $k$-means is that it often converges to locally optimal solutions that may be far from the global minimum. Moreover, it is highly sensitive to the choice of the initial cluster centers. As a result, considerable research has been devoted to improving initialization strategies (see, e.g., \cite{arthur2006k,improvedkmeans2018,franti2019much}).

Several heuristics and metaheuristics have been developed for the MSSC problem. These include simulated annealing \citep{lee2021simulated}, nonsmooth optimization \citep{bagirov2006new, BAGIROV201612}, tabu search \citep{ALSULTAN19951443}, variable neighborhood search \citep{HANSEN2001405,Orlov2018, carrizosa2013variable}, iterated local search \citep{likas2003global}, evolutionary algorithms \citep{MAULIK20001455,SARKAR1997975}, difference of convex functions programming \citep{tao2014new,BAGIROV201612,KARMITSA2017367,KARMITSA2018245}. The \(k\)-means algorithm is also used as a local search subroutine in various algorithms, such as the population-based metaheuristic proposed by \cite{gribel2019hg}.

The methods for solving the MSSC problem to global optimality are generally based on branch-and-bound (B\&B) and column generation (CG) algorithms. The earliest B\&B for the MSSC problem is attributed to \cite{koontz1975branch}, later refined by \cite{diehr1985evaluation}. This method focuses on partial solutions with fixed assignments for a subset of the data points, exploiting the observation that the optimal solution value for the full set is no less than the sum of the optimal solutions for the subsets. \cite{brusco2006repetitive} further developed this approach into the repetitive-branch-and-bound algorithm (RBBA), which solves sequential subproblems with increasing data points. RBBA is effective for synthetic datasets up to 240 points and real-world instances up to 60 data points. Despite being classified as a B\&B algorithm, Brusco's approach does not leverage lower bounds derived from relaxations of the MINLP model. Instead, it relies on bounds derived from the inherent properties of the MSSC problem. In contrast, a more traditional line of research employs B\&B algorithms where lower bounds are obtained through appropriate mathematical programming relaxations of the MINLP model. For example, the B\&B method by \cite{sherali2005global} employs the reformulation-linearization technique to derive lower bounds by transforming the nonlinear problem into a 0-1 mixed-integer program. This method claims to handle problems up to 1000 entities, though replication attempts have shown high computing times for real datasets with around 20 objects \citep{aloise2011evaluating}. More recent efforts by \cite{burgard2023mixed} have focused on mixed-integer programming techniques to improve solver performance, but these have not yet matched the leading exact methods for MSSC.

The first CG algorithm for MSSC problem has been proposed by \cite{du1999interior}. In this algorithm, the restricted master problem is solved using an interior point method while for the auxiliary problem a hyperbolic program with binary variables is used to find a column with negative reduced cost. To accelerate the resolution of the auxiliary problem, variable-neighborhood-search heuristics are used to obtain a good initial solution. Although this approach successfully solved medium-sized benchmark instances, including the popular Iris dataset with 150 entities, the resolution of the auxiliary problem proved to be a bottleneck due to the unconstrained quadratic 0-1 optimization problem. To overcome this issue, \cite{aloise2012improved} proposed an improved CG algorithm that uses a geometric-based approach to solve the auxiliary problem. Specifically, their approach involves solving a certain number of convex quadratic problems to obtain the solution of the auxiliary problem. When the points to be clustered are in the plane, the maximum number of convex problems to solve is polynomially bounded. Otherwise, the algorithm needs to find the cliques in a certain graph induced by the current solution of the master problem to solve the auxiliary problems. The efficiency of this algorithm depends on the sparsity of the graph, which increases as the number of clusters $K$ increases. Therefore, the algorithm proposed by \cite{aloise2012improved} is particularly efficient when the graph is sparse and $K$ is large. Their method was able to provide exact solutions for large-scale problems, including one instance of 2300 entities, but only when the ratio between $N$ and $K$ is small. More recently, \cite{sudoso2024column} combined the CG proposed by \cite{aloise2012improved} with dynamic constraint aggregation \citep{bouarab2017dynamic} to accelerate the resolution of the restricted master problem, which is known to suffer from high degeneracy.  When solving MSSC instances in the plane using branch-and-price, the authors show that this method can handle datasets up to about 6000 data points.

Finally, there is a large branch of literature towards the application of techniques from semidefinite programming (SDP). \cite{peng2005new} and \cite{peng2007approximating} showed the equivalence between the MINLP model and a 0-1 SDP reformulation. \cite{aloise2009branch} developed a branch-and-cut algorithm based on the linear programming (LP) relaxation of the 0-1 SDP model, solving instances up to 202 data points. \cite{piccialli2022sos} further advanced this with a branch-and-cut algorithm using SDP relaxation and polyhedral cuts, capable of solving real-world instances up to 4,000 data points. This algorithm currently represents the state-of-the-art exact solver for MSSC. Additionally, \cite{liberti2022side} examine the application of MINLP techniques to MSSC with side constraints, while the SDP-based exact algorithms proposed in \cite{piccialli2022semi, piccialli2023global} demonstrate state-of-the-art performance for constrained variants of MSSC.

\section{Decomposition strategy and lower bound computation} \label{sec:definition}
By exploiting the Huygens' theorem (e.g., \cite{edwards1965method}), stating that the sum of squared Euclidean distances from individual points to their cluster centroid is equal to the sum of squared pairwise Euclidean distances, divided by the number of points in the cluster, Problem \eqref{eq:MSSCcentroid} can be reformulated as 
\begin{subequations}
\label{eq:MSSC_intdata}
\begin{align}
\textrm{MSSC}(O,K) = \min_{} & \sum_{j=1}^K\frac{\displaystyle\sum_{i=1}^N\sum_{i^\prime=1}^N \delta_{ij}\delta_{i^\prime j}||p_i - p_{i'}||^2 }{2 \, \cdot \displaystyle\sum_{i=1}^N\delta_{ij}}\\
\text{s.t.}~ & \sum_{j=1}^K \delta_{ij} = 1,\quad \forall i \in [N],\\
& \sum_{i=1}^N \delta_{ij}  \ge 1, \quad \forall j \in [K],\\
& \delta_{ij} \in \{0,1\}, \quad \forall i \in [N], \ \forall j \in [K].
\end{align}
\end{subequations}


Our idea is to exploit the following fundamental result provided in \cite{koontz1975branch,diehr1985evaluation} and also used in \cite{brusco2006repetitive}.

\begin{proposition}\label{prop1}\citep{koontz1975branch, diehr1985evaluation}
Assume the dataset $O=\{p_1,\ldots,p_N\}$ is divided into $T$ subsets $\mathcal{S} = \{S_1,\ldots,S_T\}$ such that $\cup_{t=1}^TS_t = O$, $S_t\cap S_{t^\prime}=\emptyset$ and $S_t \neq \emptyset$ for all $t,t^\prime\in [T]$, $t\not=t^\prime$, i.e., $\mathcal{S}$ is a partition of O. 
Assume also that the optimal value of the MSSC problem on each subset is available, i.e., $\textrm{MSSC}(S_t,K)$ is known, for all $t\in [T]$. Then:
\begin{equation}\label{eq:lower}
   \textrm{MSSC}\left(O,K\right)=\displaystyle \textrm{MSSC}\left(\displaystyle\bigcup_{t=1}^T S_t,K\right)   \geq \displaystyle\sum_{t=1}^T \textrm{MSSC}(S_t,K) = LB^*.
\end{equation}
\end{proposition}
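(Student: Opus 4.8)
The plan is to start from a globally optimal partition of the full dataset and show that it induces, on each subset $S_t$, a feasible clustering whose cost is no larger than the corresponding slice of $\mathrm{MSSC}(O,K)$. Concretely, let $\mathcal{P}^*=\{C_1,\ldots,C_K\}$ be an optimal $K$-partition of $O$ with centroids $\mu_1,\ldots,\mu_K$, so that $\mathrm{MSSC}(O,K)=\sum_{j=1}^K\sum_{i:p_i\in C_j}\|p_i-\mu_j\|^2$. Since $\mathcal{S}=\{S_1,\ldots,S_T\}$ is a partition of $O$, each cluster $C_j$ is the disjoint union of the sets $C_j\cap S_t$, $t\in[T]$, and hence the objective splits as
\begin{equation*}
\mathrm{MSSC}(O,K)=\sum_{t=1}^T\sum_{j=1}^K\sum_{i:p_i\in C_j\cap S_t}\|p_i-\mu_j\|^2 .
\end{equation*}

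Next I would invoke the elementary fact that, for any finite point set, the centroid is the unique minimizer of the sum of squared Euclidean distances to a common center (the first-order optimality condition on the $\mu_j$ already noted after~\eqref{eq:MSSCcentroid}). Applying this to each nonempty set $C_j\cap S_t$, with $\mu_j^t$ its centroid, gives $\sum_{i:p_i\in C_j\cap S_t}\|p_i-\mu_j^t\|^2\le\sum_{i:p_i\in C_j\cap S_t}\|p_i-\mu_j\|^2$, while empty intersections contribute zero on both sides. Therefore
\begin{equation*}
\mathrm{MSSC}(O,K)\ \ge\ \sum_{t=1}^T\Bigl(\sum_{j=1}^K\sum_{i:p_i\in C_j\cap S_t}\|p_i-\mu_j^t\|^2\Bigr).
\end{equation*}
For each fixed $t$, the nonempty sets among $C_1\cap S_t,\ldots,C_K\cap S_t$ form a partition of $S_t$ into some $K_t\le K$ clusters, so the parenthesized quantity is the cost of a feasible $K_t$-clustering of $S_t$ and is thus at least $\mathrm{MSSC}(S_t,K_t)\ge\mathrm{MSSC}(S_t,K)$, the last step using that allowing more clusters can only decrease the optimal sum of squares (split any cluster with at least two points; this is possible whenever $|S_t|\ge K$). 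Chaining over $t$ gives $\mathrm{MSSC}(O,K)\ge\sum_{t=1}^T\mathrm{MSSC}(S_t,K)=LB^*$, and the equality $\mathrm{MSSC}(O,K)=\mathrm{MSSC}\bigl(\bigcup_{t=1}^T S_t,K\bigr)$ is immediate from $\bigcup_{t=1}^T S_t=O$.

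I expect the only genuinely delicate point to be the bookkeeping when some $C_j\cap S_t$ is empty, i.e.\ when the optimal global clustering assigns no point of $S_t$ to cluster $j$: one must then argue that $\mathrm{MSSC}(S_t,K)$ still lower-bounds the cost of a clustering of $S_t$ that uses fewer than $K$ groups, which is exactly where the monotonicity of $\mathrm{MSSC}(S_t,\cdot)$ in the number of clusters and the implicit assumption $|S_t|\ge K$ enter. Everything else is a routine interchange of summations combined with the centroid-optimality inequality, and the same argument goes through verbatim if one prefers to phrase it through Huygens' identity using the pairwise-distance formulation~\eqref{eq:MSSC_intdata} rather than through centroids.
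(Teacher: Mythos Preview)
The paper does not give its own proof of Proposition~\ref{prop1}; it attributes the result to \cite{koontz1975branch} and \cite{diehr1985evaluation} and moves on. Your argument is correct and is in fact the same mechanism the paper develops more explicitly in its subsequent theoretical analysis: your centroid-replacement inequality is exactly the content of Lemma~\ref{lem:anova} (the ANOVA identity, whose nonnegative between-subset term $\sum_t n_{jt}\|\mu_{jt}-\mu_j\|^2$ is the slack you discard), and your ``feasible clustering bounds the optimum on $S_t$'' step is Lemma~\ref{lem:restriction-baseline}. Proposition~\ref{prop:tight} then packages both into the exact gap formula~\eqref{eq:gap-master}, whose nonnegativity reproves Proposition~\ref{prop1}.

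One point where you are actually more careful than the paper: when some $C_j\cap S_t$ is empty, the restriction $\mathcal{P}^*|_{S_t}$ has fewer than $K$ nonempty parts, so it is not a $K$-partition in the sense of constraints~\eqref{eq:MSSCempty}. The paper's proof of Lemma~\ref{lem:restriction-baseline} glosses over this, simply asserting that the restriction is feasible; you correctly route around it via the monotonicity $\mathrm{MSSC}(S_t,K_t)\ge\mathrm{MSSC}(S_t,K)$ for $K_t\le K$, together with the implicit standing assumption $|S_t|\ge K$ needed for $\mathrm{MSSC}(S_t,K)$ to be defined at all.
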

Proposition \ref{prop1} states that the sum of the optimal values on all the subsets provides a lower bound $LB^*$ on the optimal value of Problem \eqref{eq:MSSCcentroid}. 

To our knowledge, a theoretical analysis of the quality of the lower bound in  Eq. \eqref{eq:lower} has not been performed in the literature. In particular, the following research questions naturally arise: when and why is the bound tight? Given a number $T$ of subsets, what characteristics of the partition influence the quality of the bound? The next subsections are devoted to answering these questions.

\subsection{Theoretical Analysis}\label{sec:theoretical}
The first step of the analysis focuses on how the MSSC objective decomposes when the dataset is partitioned. For a cluster $C_j$ with centroid $\mu_j$, $j \in [K]$, we can state the following lemma.

\newpage

\begin{lemma}\label{lem:anova}
Let $\mathcal{P}=\{C_1,\dots,C_K\}$ be any $K$-partition of $O$ with cluster centroids
$\mu_j:=\frac{1}{|C_j|}\sum_{i:p_i\in C_j}p_i$. Let $\mathcal{S}=\{S_1,\dots,S_T\}$ be any partition of $O$.
For a fixed $j\in[K]$ and $t\in[T]$, define the sets $A_{jt}:=C_j\cap S_t$ (i.e., the set of points of cluster 
$C_j$ that lie in subset $S_t$), their sizes
$n_{jt}:=|A_{jt}|$, and their means $\mu_{jt}:=\frac{1}{n_{jt}}\sum_{i:p_i\in A_{jt}} p_i$ for $n_{jt}>0$. Then
\begin{equation}\label{eq:anova}
\sum_{i:p_i\in C_j}\|p_i-\mu_j\|^2
\;=\;
\sum_{t=1}^T \sum_{i:p_i\in A_{jt}}\|p_i-\mu_{jt}\|^2
\;+\;\sum_{t=1}^T n_{jt}\,\|\mu_{jt}-\mu_j\|^2 .
\end{equation}
\end{lemma}
\begin{proof}
Consider a cluster $C_j$ and a subset $S_t$. For each $p_i\in A_{jt}$, let 
\[
p_i-\mu_j \;=\; (p_i-\mu_{jt}) + (\mu_{jt}-\mu_j),
\]
and expand the squared norm:
\begin{align*}
\|p_i-\mu_j\|^2
= \|(p_i-\mu_{jt}) + (\mu_{jt}-\mu_j)\|^2 = \|p_i-\mu_{jt}\|^2 \;+\; 2\langle p_i-\mu_{jt},\,\mu_{jt}-\mu_j\rangle \;+\; \|\mu_{jt}-\mu_j\|^2.
\end{align*}
Now sum this identity over all points $p_i\in A_{jt}$:
{\small
\begin{align}\label{eq:objdec}
\sum_{i:p_i\in A_{jt}}\|p_i-\mu_j\|^2
&= \sum_{i:p_i\in A_{jt}}\|p_i-\mu_{jt}\|^2
\;+\; 2\left\langle \sum_{i:p_i\in A_{jt}}(p_i-\mu_{jt}),\,\mu_{jt}-\mu_j\right\rangle
\;+\; \sum_{i:p_i\in A_{jt}}\|\mu_{jt}-\mu_j\|^2.
\end{align}}
The middle “cross” term in Eq. \eqref{eq:objdec} vanishes since $\mu_{jt}$ is the mean of the points in $A_{jt}$:
\[
\sum_{i:p_i\in A_{jt}}(p_i-\mu_{jt}) \;=\; \left(\sum_{i:p_i\in A_{jt}} p_i\right) - n_{jt}\,\mu_{jt} \;=\; 0.
\]
The last term in Eq. \eqref{eq:objdec} does not depend on $p_i$, hence summed up $n_{jt}$ times, equals
$n_{jt}\,\|\mu_{jt}-\mu_j\|^2$. Therefore, for each $t\in[T]$ we have
\[
\sum_{i:p_i\in A_{jt}}\|p_i-\mu_j\|^2
\;=\; \sum_{i:p_i\in A_{jt}}\|p_i-\mu_{jt}\|^2 \;+\; n_{jt}\,\|\mu_{jt}-\mu_j\|^2.
\]
Finally, summing the previous equality over $t \in [T]$ yields Eq. \eqref{eq:anova}.
\end{proof}

We now introduce the definition of the restriction of a cluster assignment to a subset $S_t$.

Given a $K$-partition $\mathcal{P}=\{C_1,\dots,C_K\}$ of $O$ and a subset $S_t\subseteq O$,
the \emph{restriction} of $\mathcal{P}$ to $S_t$ is
\[
\mathcal{P}|_{S_t}\;:=\;\{C_1\cap S_t,\;C_2\cap S_t,\;\dots,\;C_K\cap S_t\}.
\]
The restriction $\mathcal{P}|_{S_t}$ is a $K$-partition of $S_t$ obtained by selecting from each cluster $C_j$ the data points in $S_t$. We denote by $f(\mathcal{P}|_{S_t})$ the MSSC value on $S_t$ obtained by keeping the cluster assignment associated with $\mathcal{P}$ and recomputing the centroids on the restricted subsets $C_j\cap S_t$. Note that empty intersections contribute $0$ to $f(\mathcal{P}|_{S_t})$. We now give the following result for the restriction of a cluster assignment.
\begin{lemma}\label{lem:restriction-baseline}
For any $K$-partition $\mathcal{P}$ of $O$ and any subset $S_t\subseteq O$,
\[
\textrm{MSSC}(S_t,K)\;\le\; f(\mathcal{P}|_{S_t}),
\]
and equality holds if and only if $\mathcal{P}|_{S_t}$ is an optimal $K$-partition for $S_t$.
\end{lemma}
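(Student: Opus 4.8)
The plan is to read Lemma~\ref{lem:restriction-baseline} as the elementary fact that a minimum is no larger than the objective value at any particular feasible point, with the one subtlety being the treatment of possibly-empty intersections $C_j\cap S_t$. Recall that $f(\mathcal{P}|_{S_t})$ is, by construction, the MSSC objective of~\eqref{eq:MSSCcentroid} (equivalently~\eqref{eq:MSSC_intdata}) for the instance $(S_t,K)$, evaluated at the assignment induced by $\mathcal{P}$ \emph{after} recomputing the centroids of the parts $C_j\cap S_t$; this is precisely why the comparison with $\mathrm{MSSC}(S_t,K)=\min\{f(\mathcal{P}'):\mathcal{P}'\text{ a $K$-partition of }S_t\}$ is clean.

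First I would dispatch the generic case where all $K$ sets $C_j\cap S_t$ are nonempty: then $\mathcal{P}|_{S_t}$ is a feasible $K$-partition of $S_t$ with objective value $f(\mathcal{P}|_{S_t})$, so $\mathrm{MSSC}(S_t,K)\le f(\mathcal{P}|_{S_t})$ directly from the definition of $\mathrm{MSSC}$ as a minimum. If instead only $K'<K$ of the parts are nonempty, then since empty parts contribute $0$, the quantity $f(\mathcal{P}|_{S_t})$ equals the MSSC value of a $K'$-partition of $S_t$; from it one obtains a genuine $K$-partition without increasing the objective by repeatedly splitting off a single point from a part of size $\ge 2$ into a new singleton. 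That such a split cannot increase the objective is exactly the decomposition identity of Lemma~\ref{lem:anova} applied to that part with the two-block partition $\{\text{singleton},\ \text{rest}\}$: the within-block term of the singleton is $0$ and the between-block term is nonnegative. (This step needs $|S_t|\ge K$, which we take as a standing assumption on the subsets; alternatively, under the convention implicit in the definition of the restriction, a ``$K$-partition'' may have empty parts and the inequality is immediate.) Hence $\mathrm{MSSC}(S_t,K)\le f(\mathcal{P}|_{S_t})$ in every case.

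For the equivalence, I would simply unwind the definition: $\mathcal{P}|_{S_t}$ is an optimal $K$-partition of $S_t$ exactly when its MSSC objective attains the minimum $\mathrm{MSSC}(S_t,K)$, i.e.\ exactly when $f(\mathcal{P}|_{S_t})=\mathrm{MSSC}(S_t,K)$, which combined with the inequality above yields the stated ``if and only if''. I do not expect a genuine obstacle here; the only place demanding care is the empty-intersection bookkeeping in the second step, and in particular invoking the nonnegativity of the between-block term from Lemma~\ref{lem:anova} to justify the point-splitting move without circular reasoning.
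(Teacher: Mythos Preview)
Your proposal is correct and follows essentially the same approach as the paper: the paper's proof simply observes that $\mathrm{MSSC}(S_t,K)$ is defined as a minimum over $K$-partitions of $S_t$, that $\mathcal{P}|_{S_t}$ is such a $K$-partition, and that equality is precisely optimality. The only difference is that the paper silently adopts the convention you flag in parentheses---allowing empty parts in the restriction---so it bypasses your point-splitting argument entirely.
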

\begin{proof}
By definition, $\textrm{MSSC}(S_t,K)=\min\{\,f(Q): Q \text{ is a $K$-partition of }S_t\,\}$. Since $Q=\mathcal{P}|_{S_t}$ is a $K$-partition of $S_t$, the inequality is verified. The equality holds when the minimum is attained in $\mathcal{P}|_{S_t}$.
\end{proof}

\noindent
Based on the previous analysis, the next proposition gives an analytical description of the gap between the optimal MSSC value and the lower bound $LB^*$ in Proposition \ref{prop1}.

\begin{proposition}\label{prop:tight}
Let $\mathcal{P}^*=\{C_1^*,\dots,C_K^*\}$ with centroids $\{\mu^*_1,\ldots,\mu^*_K\}$ be an optimal MSSC solution on $O$ with value $\textrm{MSSC}(O,K)$,
and let $\mathcal{S}=\{S_1,\dots,S_T\}$ be any partition of $O$. Denote by $\textrm{MSSC}(S_t,K)$ the optimal MSSC value on $S_t$, and let $\mu_{jt}=\frac{1}{n_{jt}}\sum_{i:p_i\in C_{j}^*\cap S_t} p_i$ where $n_{jt}=|C_{j}^*\cap S_t| > 0$. Then,
{\small
\begin{align}
\textrm{MSSC}(O,K) -\sum_{t=1}^T \textrm{MSSC}(S_t,K)
&=\underbrace{\sum_{t=1}^T\Big[f\!\left(\mathcal{P}^*|_{S_t}\right)-\textrm{MSSC}(S_t,K)\Big]}_{\text{(A) per-subset suboptimality }}
+
\underbrace{\sum_{j=1}^K\sum_{t=1}^T n_{jt}\,\|\mu_{jt}-\mu^*_j\|^2}_{\text{(B) between-subset dispersion}} .
\label{eq:gap-master}
\end{align}}
\end{proposition}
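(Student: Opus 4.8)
The plan is to obtain the identity by applying Lemma \ref{lem:anova} to the optimal partition $\mathcal{P}^*$, summing over the $K$ clusters, and then recognizing the two resulting aggregate terms. First I would fix $j\in[K]$, set $A_{jt}=C_j^*\cap S_t$ with sizes $n_{jt}$ and means $\mu_{jt}$ as in the statement, and invoke Eq. \eqref{eq:anova} with $C_j=C_j^*$ and $\mu_j=\mu^*_j$. This gives, for each $j$,
\[
\sum_{i:p_i\in C_j^*}\|p_i-\mu^*_j\|^2
=\sum_{t=1}^T\sum_{i:p_i\in A_{jt}}\|p_i-\mu_{jt}\|^2
+\sum_{t=1}^T n_{jt}\,\|\mu_{jt}-\mu^*_j\|^2 .
\]
Throughout I would use the convention already adopted in the excerpt: if $n_{jt}=0$ then $A_{jt}=\emptyset$, the inner sum over $A_{jt}$ is empty, and the term $n_{jt}\|\mu_{jt}-\mu^*_j\|^2$ is $0$, so the (undefined) value of $\mu_{jt}$ is immaterial; it is worth flagging this bookkeeping point explicitly since it is the only place any care is needed.

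Next I would sum the displayed identity over $j\in[K]$. The left-hand side becomes $\sum_{j=1}^K\sum_{i:p_i\in C_j^*}\|p_i-\mu^*_j\|^2=f(\mathcal{P}^*)=\mathrm{MSSC}(O,K)$, by optimality of $\mathcal{P}^*$. For the right-hand side I would swap the order of summation, grouping by $t$: the first block becomes $\sum_{t=1}^T\big(\sum_{j=1}^K\sum_{i:p_i\in C_j^*\cap S_t}\|p_i-\mu_{jt}\|^2\big)$. Here I would use the definition of the restriction $\mathcal{P}^*|_{S_t}=\{C_1^*\cap S_t,\dots,C_K^*\cap S_t\}$ and of $f(\mathcal{P}^*|_{S_t})$ — namely, the MSSC value on $S_t$ obtained by keeping the assignment induced by $\mathcal{P}^*$ and recomputing centroids on the $C_j^*\cap S_t$, which are exactly the $\mu_{jt}$ — to identify the inner parenthesis as $f(\mathcal{P}^*|_{S_t})$. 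This is the one genuinely conceptual step: seeing that the ``within-subset'' part of the ANOVA-type decomposition of Lemma \ref{lem:anova} is precisely the objective of the restricted assignment. The second block is already in the desired form $\sum_{j=1}^K\sum_{t=1}^T n_{jt}\|\mu_{jt}-\mu^*_j\|^2$.

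Combining, I would arrive at
\[
\mathrm{MSSC}(O,K)=\sum_{t=1}^T f\!\left(\mathcal{P}^*|_{S_t}\right)+\sum_{j=1}^K\sum_{t=1}^T n_{jt}\,\|\mu_{jt}-\mu^*_j\|^2 ,
\]
and finally subtract $\sum_{t=1}^T\mathrm{MSSC}(S_t,K)$ from both sides, rewriting $\sum_t f(\mathcal{P}^*|_{S_t})-\sum_t\mathrm{MSSC}(S_t,K)=\sum_t\big[f(\mathcal{P}^*|_{S_t})-\mathrm{MSSC}(S_t,K)\big]$, which is Eq. \eqref{eq:gap-master}. I do not anticipate a real obstacle: the argument is a direct corollary of Lemma \ref{lem:anova} together with the definition of $f(\mathcal{P}^*|_{S_t})$, and the only thing to watch is the handling of empty intersections $A_{jt}$ and the legitimacy of interchanging the finite double sums over $j$ and $t$. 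One may optionally add a remark connecting this to Lemma \ref{lem:restriction-baseline}: term (A) is a sum of nonnegative quantities that vanishes exactly when each $\mathcal{P}^*|_{S_t}$ is optimal for $S_t$, and term (B) is manifestly nonnegative, which re-derives the bound of Proposition \ref{prop1} and characterizes its tightness.
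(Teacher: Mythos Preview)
Your proposal is correct and follows essentially the same route as the paper: apply Lemma~\ref{lem:anova} to each optimal cluster $C_j^*$, sum over $j$ to obtain $\mathrm{MSSC}(O,K)=\sum_t f(\mathcal{P}^*|_{S_t})+\sum_{j,t} n_{jt}\|\mu_{jt}-\mu_j^*\|^2$, and then subtract $\sum_t \mathrm{MSSC}(S_t,K)$. Your write-up is in fact more explicit than the paper's about the empty-intersection convention and the identification of the within-subset term with $f(\mathcal{P}^*|_{S_t})$, but the argument is the same.
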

\begin{proof}
By Lemma~\ref{lem:anova}, for each cluster $C_j^*$ we have
\begin{align}\label{eq:9}
\sum_{i:p_i\in C_j^*}\|p_i-\mu_j^*\|^2
=\sum_{t=1}^T \sum_{i:p_i\in C_j^*\cap S_t}\|p_i-\mu_{jt}\|^2 + \sum_{t=1}^T n_{jt}\,\|\mu_{jt}-\mu^*_j\|^2.
\end{align}
Summing Eq. \eqref{eq:9} for all $j \in [K]$ gives
\[
\textrm{MSSC}(O,K)=\sum_{t=1}^T f\!\left(\mathcal{P}^*|_{S_t}\right) \;+\; \sum_{j=1}^K \sum_{t=1}^T n_{jt}\,\|\mu_{jt}-\mu^*_j\|^2.
\]
Then, subtracting $\sum_{t=1}^T \textrm{MSSC}(S_t,K)$ on both sides of the equation above we obtain Eq. \eqref{eq:gap-master}. 
\end{proof}

Note that the right-hand side of Eq. \eqref{eq:gap-master} is always nonnegative, and in general, there will be a gap. This gap is generated by two nonnegative terms: the per-subset suboptimality (A) and the between-subset dispersion (B). Now we investigate under which conditions these two terms vanish.

\newpage

\begin{assumption}\label{ass0}[Restriction optimality] Let $\mathcal{P}^*=\{C^*_1,\dots,C^*_K\}$ be an optimal MSSC solution on $O$ with value $\textrm{MSSC}(O,K)$,
and let $\mathcal{S}=\{S_1,\dots,S_T\}$ be any partition of $O$. Denote by $\textrm{MSSC}(S_t,K)$ the optimal MSSC value on $S_t$. For each $t \in [T]$, the restriction $\mathcal{P}^*|_{S_t}$
attains $\textrm{MSSC}(S_t,K)$; i.e., $f(\mathcal{P}^*|_{S_t})=\textrm{MSSC}(S_t,K)$.
\end{assumption}

Under Assumption \ref{ass0}, Proposition \ref{prop:tight} implies
\begin{align*}
\textrm{MSSC}(O,K) -\sum_{t=1}^T \textrm{MSSC}(S_t,K)
&=\sum_{j=1}^K\sum_{t=1}^T n_{jt}\,\|\mu_{jt}-\mu^*_j\|^2.
\end{align*}
i.e., the gap only depends on the distance between the centroids of the optimal clustering and the centroids of the restrictions. We can finally state the equality/tightness condition for the lower bound in Proposition \ref{prop1}, which is given in the following proposition.

\begin{proposition}\label{prop:tightass2}
Let $\mathcal{P}^*=\{C^*_1,\dots,C^*_K\}$ be an optimal MSSC solution on $O$ with value $\textrm{MSSC}(O,K)$,
and let $\mathcal{S}=\{S_1,\dots,S_T\}$ be a partition of $O$. Furthermore, let $\mathcal{P}_t^*=\{C^*_{1t},\dots,C^*_{Kt}\}$ be an optimal MSSC solution on $S_t$ with centroids $\{\mu^*_{1t},\ldots,\mu^*_{Kt}\}$ and value $\textrm{MSSC}(S_t,K)$. The equality
\[
\textrm{MSSC}(O,K)=\sum_{t=1}^T \textrm{MSSC}(S_t,K)
\]
holds if and only if $\mu^*_{jt}=\mu^*_j$ for every $j\in[K]$ and $t\in[T]$.
\end{proposition}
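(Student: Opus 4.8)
The plan is to prove both directions by leveraging the decomposition already established in Proposition~\ref{prop:tight}, combined with Lemma~\ref{lem:restriction-baseline}. The key observation is that the gap $\mathrm{MSSC}(O,K)-\sum_{t=1}^T\mathrm{MSSC}(S_t,K)$ equals the sum of two nonnegative terms, (A) and (B), so the gap vanishes if and only if \emph{both} terms vanish term-by-term. The subtlety is that the centroids $\mu_{jt}$ appearing in Proposition~\ref{prop:tight} are the means of the restricted sets $C_j^*\cap S_t$, \emph{not} the centroids $\mu^*_{jt}$ of an independently-chosen optimal partition $\mathcal{P}_t^*$ of $S_t$; reconciling these two families of centroids is where the argument needs care.

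First I would prove the ``if'' direction. Suppose $\mu^*_{jt}=\mu^*_j$ for all $j,t$. Since the family $\{C^*_{jt}\}_j$ is a $K$-partition of $S_t$ whose cluster means all coincide with the corresponding global centroids $\mu^*_j$, I want to argue that $\mathcal{P}^*|_{S_t}$ is then an optimal partition of $S_t$ with the same centroids. Concretely, evaluate $f(\mathcal{P}^*|_{S_t})$ by assigning each point of $S_t$ to the cluster it belongs to under $\mathcal{P}^*$; summing Lemma~\ref{lem:anova} over $j$ for the single subset $S_t$ shows $f(\mathcal{P}^*|_{S_t}) = \sum_j\sum_{i:p_i\in C_j^*}\|p_i-\mu_j^*\|^2 \text{ restricted to } S_t$ minus the dispersion term $\sum_j n_{jt}\|\mu_{jt}-\mu_j^*\|^2$. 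Hmm — this is getting circular. The cleaner route: the hypothesis $\mu^*_{jt}=\mu^*_j$ forces every point of $S_t$ in cluster $C^*_{jt}$ to be assigned (in the optimal partition $\mathcal{P}_t^*$ of $S_t$) in a way consistent with the global centroids, so term (B) for this correspondence is zero; then I invoke Lemma~\ref{lem:anova} globally to get $\mathrm{MSSC}(O,K)=\sum_t f(\mathcal{P}_t^*)+\sum_{j,t}n_{jt}\|\mu^*_{jt}-\mu^*_j\|^2 = \sum_t \mathrm{MSSC}(S_t,K) + 0$, using that $f(\mathcal{P}_t^*)=\mathrm{MSSC}(S_t,K)$ by definition. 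The point is that $\mathcal{P}^*$ and the glued-together $\mathcal{P}_t^*$'s must agree because both have centroids $\mu^*_j$, so $\mathcal{P}^*|_{S_t}=\mathcal{P}_t^*$.

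For the ``only if'' direction, assume $\mathrm{MSSC}(O,K)=\sum_t\mathrm{MSSC}(S_t,K)$. By Proposition~\ref{prop:tight}, both (A)$=0$ and (B)$=0$. From (A)$=0$ and Lemma~\ref{lem:restriction-baseline}, each restriction $\mathcal{P}^*|_{S_t}$ is an optimal $K$-partition of $S_t$, hence we may take $\mathcal{P}_t^*=\mathcal{P}^*|_{S_t}$, which gives $C^*_{jt}=C^*_j\cap S_t$ and therefore $\mu^*_{jt}=\mu_{jt}$. From (B)$=0$, since each summand $n_{jt}\|\mu_{jt}-\mu^*_j\|^2\ge 0$, every term with $n_{jt}>0$ must vanish, i.e.\ $\mu_{jt}=\mu^*_j$; combining, $\mu^*_{jt}=\mu^*_j$ for all $j,t$ with $n_{jt}>0$. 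I would add a remark handling the degenerate case $n_{jt}=0$ (the convention that empty intersections contribute $0$, so such indices impose no constraint and the statement is read over nonempty $A_{jt}$), or note that optimality of $\mathcal{P}^*$ on $O$ together with the structure typically rules out $n_{jt}=0$ in the relevant cases.

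The main obstacle I anticipate is the identification $\mathcal{P}_t^* = \mathcal{P}^*|_{S_t}$ — i.e.\ showing the ``externally chosen'' optimal partitions of the subsets can, without loss of generality, be taken equal to the restrictions of the global optimum. In the ``only if'' direction this follows cleanly from (A)$=0$ plus the equality clause of Lemma~\ref{lem:restriction-baseline}. In the ``if'' direction it is more delicate: a priori $\mathcal{P}_t^*$ is \emph{some} optimizer and the hypothesis only controls its centroids; I need the fact that a $K$-partition of a point set is determined by a Voronoi-type assignment to its centroids (or at least that any partition with the same centroids has the same objective value), so that matching centroids $\mu^*_{jt}=\mu^*_j$ suffices to make the dispersion term (B) vanish regardless of which optimizer $\mathcal{P}_t^*$ was selected. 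I would state this carefully, possibly noting that ties in centroid-distance are a measure-zero edge case that does not affect the objective value.
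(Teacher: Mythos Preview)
Your proposal is correct and follows essentially the same approach as the paper: both directions rely on Proposition~\ref{prop:tight} to split the gap into the nonnegative terms (A) and (B), then use Lemma~\ref{lem:restriction-baseline} to identify $\mathcal{P}_t^*$ with $\mathcal{P}^*|_{S_t}$ so that the centroids $\mu^*_{jt}$ and $\mu_{jt}$ can be matched. Your treatment is in fact slightly more careful than the paper's on two points---the degenerate case $n_{jt}=0$ and the Voronoi-assignment step needed in the ``if'' direction to pass from equal centroids to equal partitions---both of which the paper handles only informally.
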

\begin{proof}
Using Lemma \ref{lem:restriction-baseline}, $f(\mathcal{P}^*|_{S_t})-\textrm{MSSC}(S_t,K)\ge 0$ for each $t \in [T]$ by optimality of $\textrm{MSSC}(S_t,K)$. Moreover, the between-subset term (B) is nonnegative by definition. If equality holds, the two nonnegative terms (A) and (B) in Eq. \eqref{eq:gap-master} must be zero. When term (A) is zero, the restriction $\mathcal{P}^*|_{S_t}$ is optimal, meaning that $\mathcal{P}^*_t=\mathcal{P}^*|_{S_t}$ so that the optimal centroids are $\mu_{jt}=\mu_{jt}^*$ for $t\in [T]$ and $j\in [K]$. Term (B) being zero implies $\mu_{jt}=\mu^*_j$ for every $j\in[K]$ and $t\in[T]$.\par
To prove the other direction, assume $\mu^*_{jt}=\mu^*_j$ for all $j\in[K]$ and $t\in[T]$. Since the optimal centroids $\mu_{ji}^*$ of the clustering on $S_t$ for $t\in [T]$ are $T$ replicas of the centroids $\mu_j^*$ of the optimal clustering on the original dataset $O$, the optimal clustering in each subset $S_t$ will coincide with the restriction of the optimal clustering, i.e., $f(\mathcal{P}^*|_{S_t})=\textrm{MSSC}(S_t,K)$, vanishing the term (A). Furthermore, term (B) vanishes since $\mu^*_{jt}=\mu^*_j$ for every $j\in[K]$ and $t\in[T]$.
 \end{proof}

We illustrate the tightness condition in Proposition \ref{prop:tightass2} with an example. 
Figure \ref{fig:exdata} illustrates the optimal clustering partition for a synthetic dataset with $N=64$ points, $D=2$ features, and $K=4$ clusters. 
The points are arranged in four well-separated groups, with centroids $\mu^*_j$ for every cluster $j \in \{1,2,3,4\}$ and optimal solution value $\text{MSSC}({O},K) = 575.674$. 
This clustering satisfies Assumption \ref{ass0}, and represents the reference solution used in our example. 
Figure \ref{fig:example2} reports a partition $\mathcal{S}$ where the subfigures correspond to the four subsets $S_t$ $(t=1,\dots,4)$. We display the points assigned to $S_t$, the solution of the clustering problem restricted to $S_t$, and the corresponding centroids $\mu_{jt}$ for each cluster $j$. It can be seen that partition $\mathcal{S}$ satisfies the hypothesis of Proposition \ref{prop:tightass2}. 
The resulting lower bound is $\text{MSSC}(O,K) =\sum_{t=1}^T \text{MSSC}(S_t,K)$, which yields an optimality gap of $0\%$, thereby certifying the tightness of the lower bound on $\mathcal{S}$.

\begin{figure}[!ht]
    \centering
    \begin{subfigure}{0.4\textwidth}
        \includegraphics[scale=0.5]{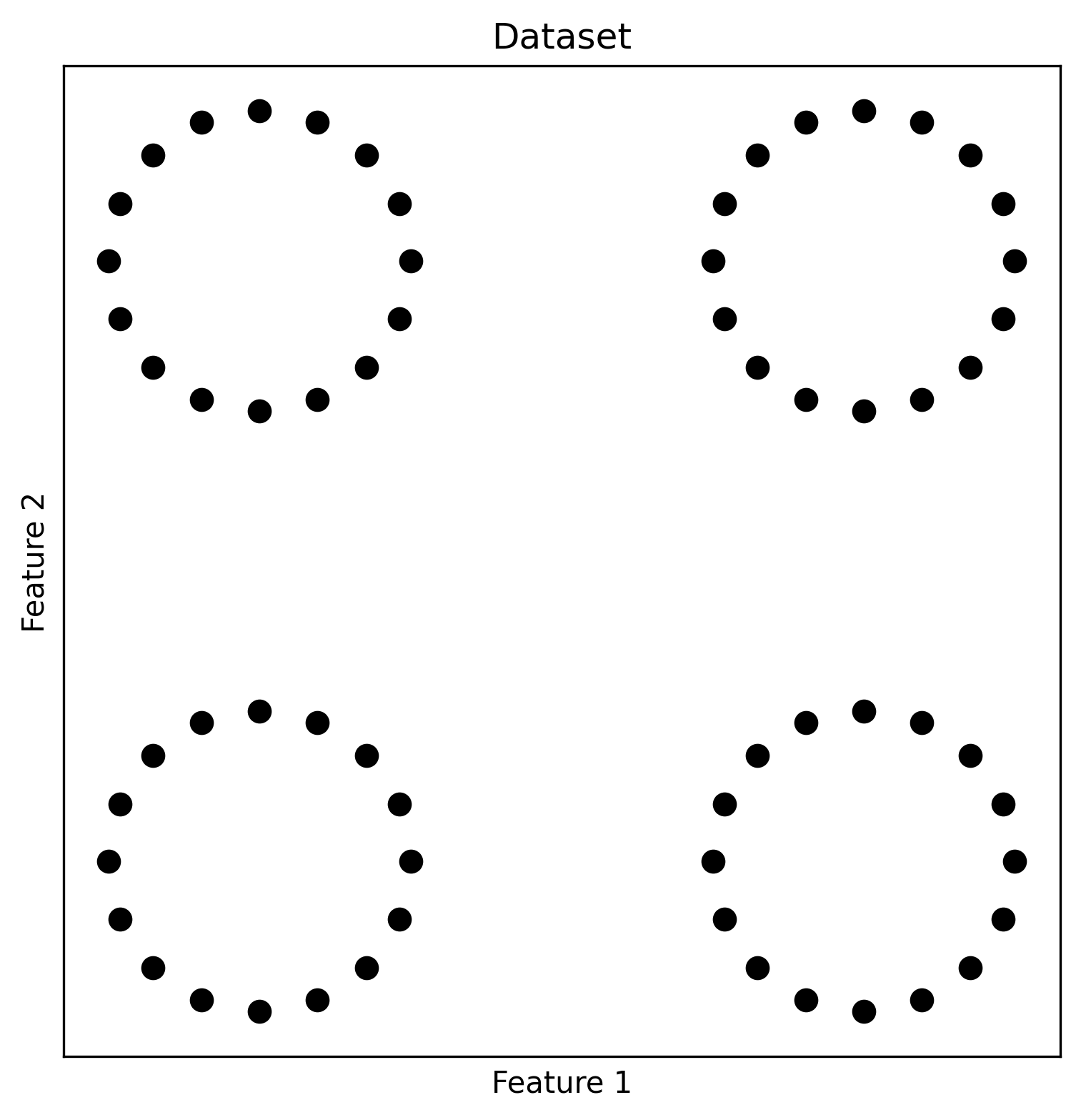}
    \end{subfigure}    
    \hfill
    \begin{subfigure}{0.4\textwidth}
        \includegraphics[scale=0.5]{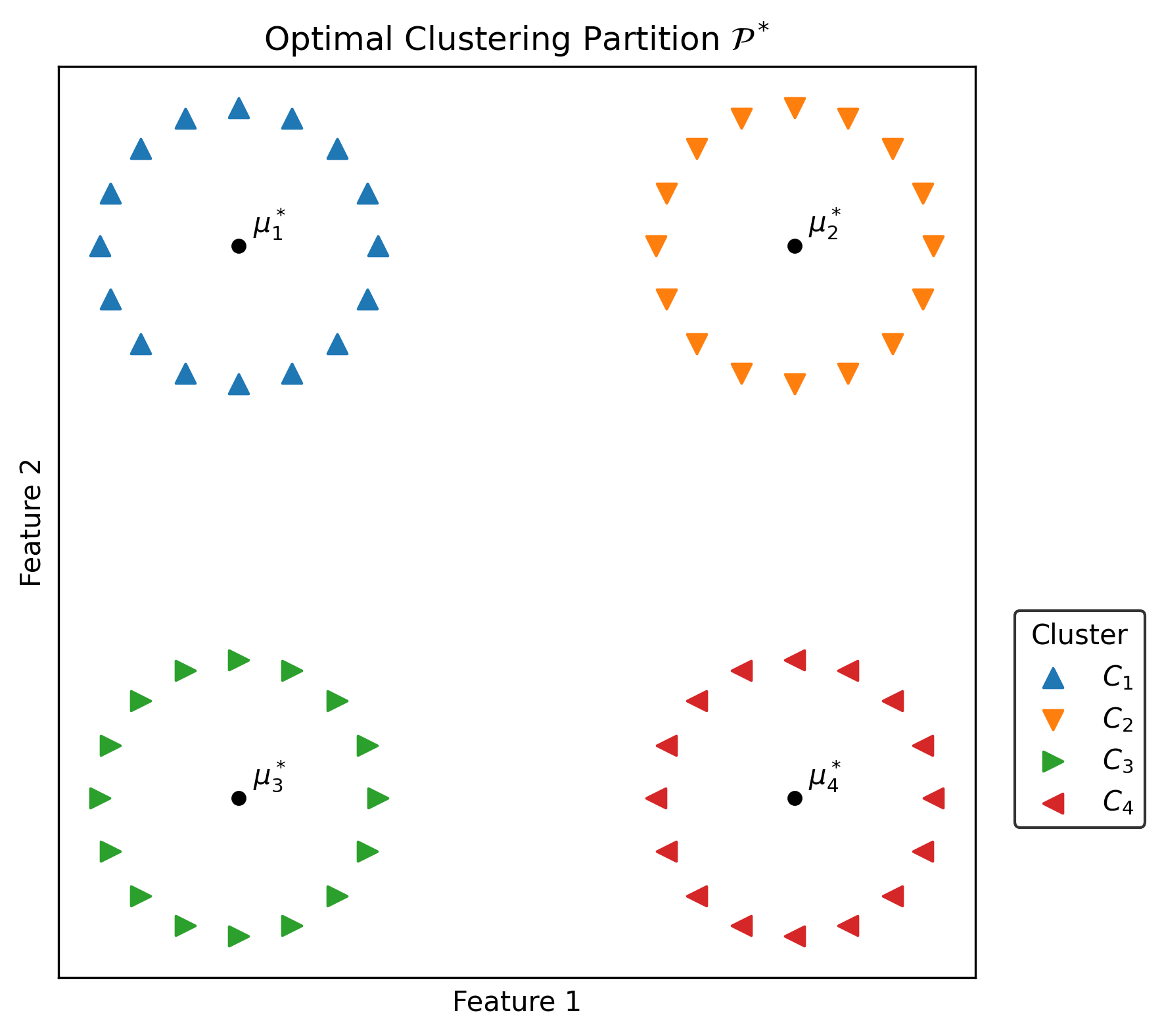}
    \end{subfigure}
    \caption{A synthetic dataset of $N=64$ points, $D=2$ features with $K=4$ natural well-separated clusters {(on the left)} and its optimal clustering partition {(on the right)}.}
    \label{fig:exdata}
\end{figure}

\begin{figure}[!ht]
    \centering
    \begin{subfigure}{0.4\textwidth}
        \caption{MSSC$(S_1, 4)= 143.9185$}
        \includegraphics[scale=0.5]{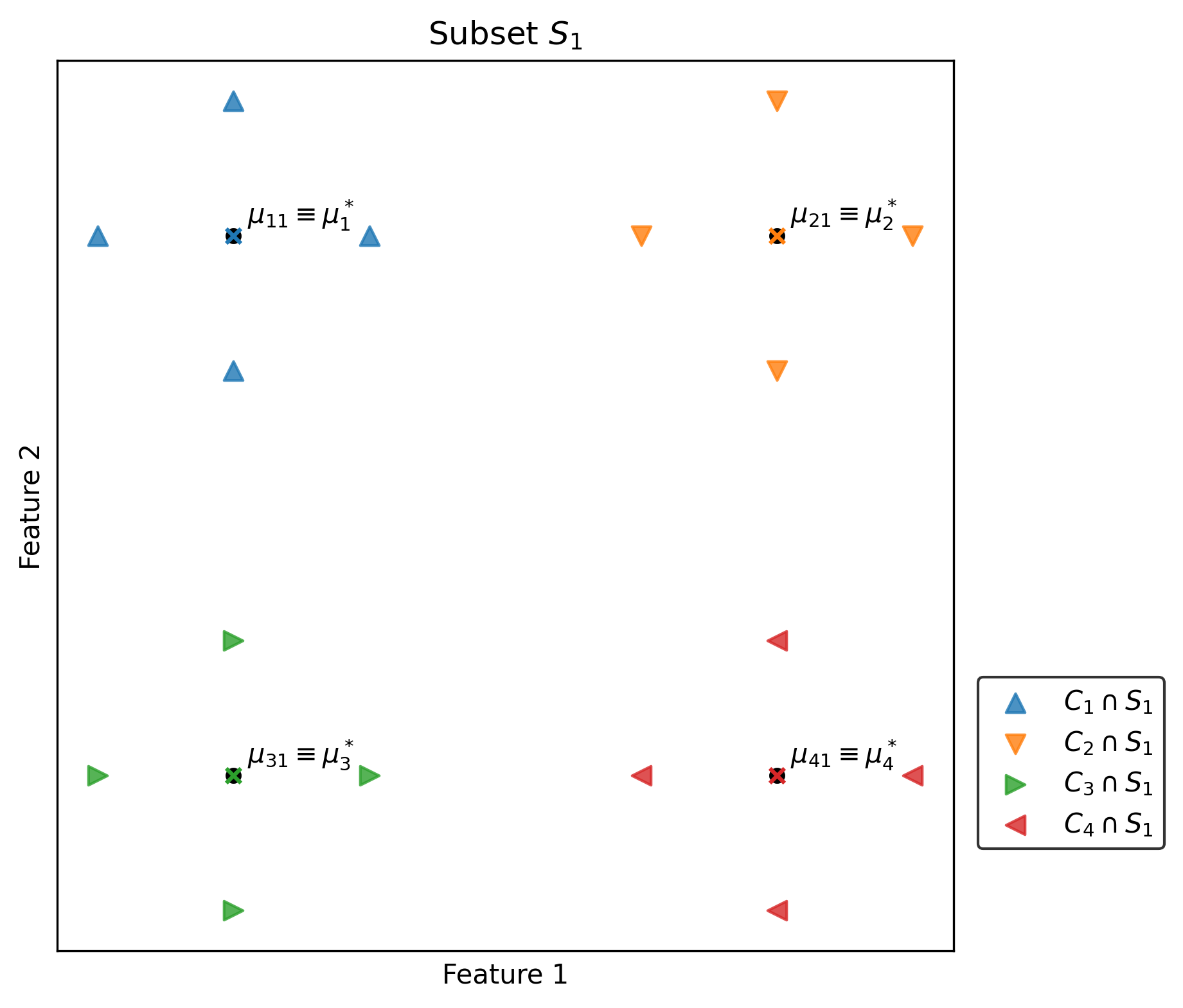}
        \label{fig:sol1}
    \end{subfigure}
    \hfill
    \begin{subfigure}{0.4\textwidth}
        \caption{MSSC$(S_2, 4)= 143.9185$}
        \includegraphics[scale=0.5]{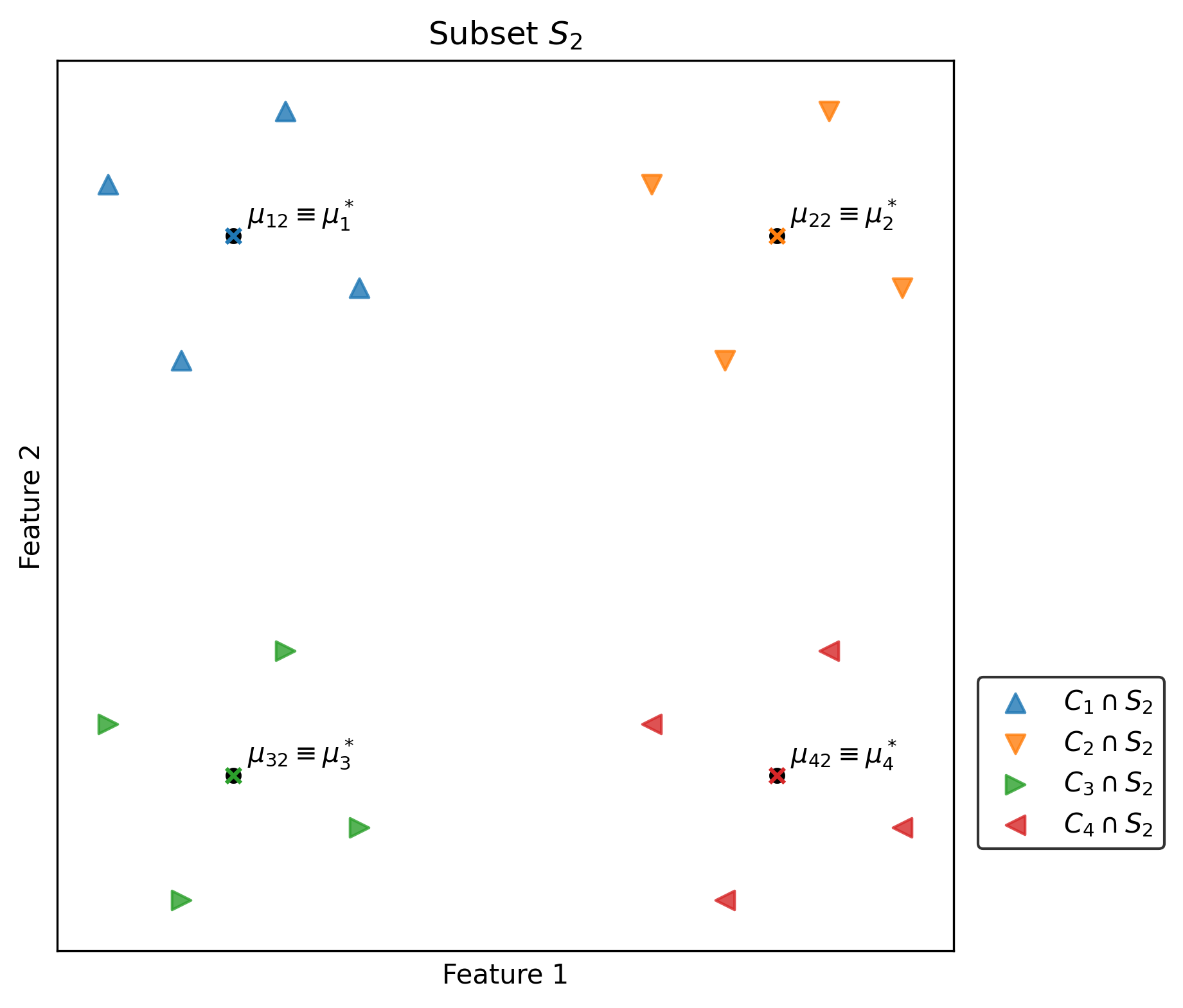}
        \label{fig:sol2}
    \end{subfigure}\\
    \begin{subfigure}{0.4\textwidth}
        \centering
        \caption{MSSC$(S_3, 4)= 143.9185$}
        \includegraphics[scale=0.5]{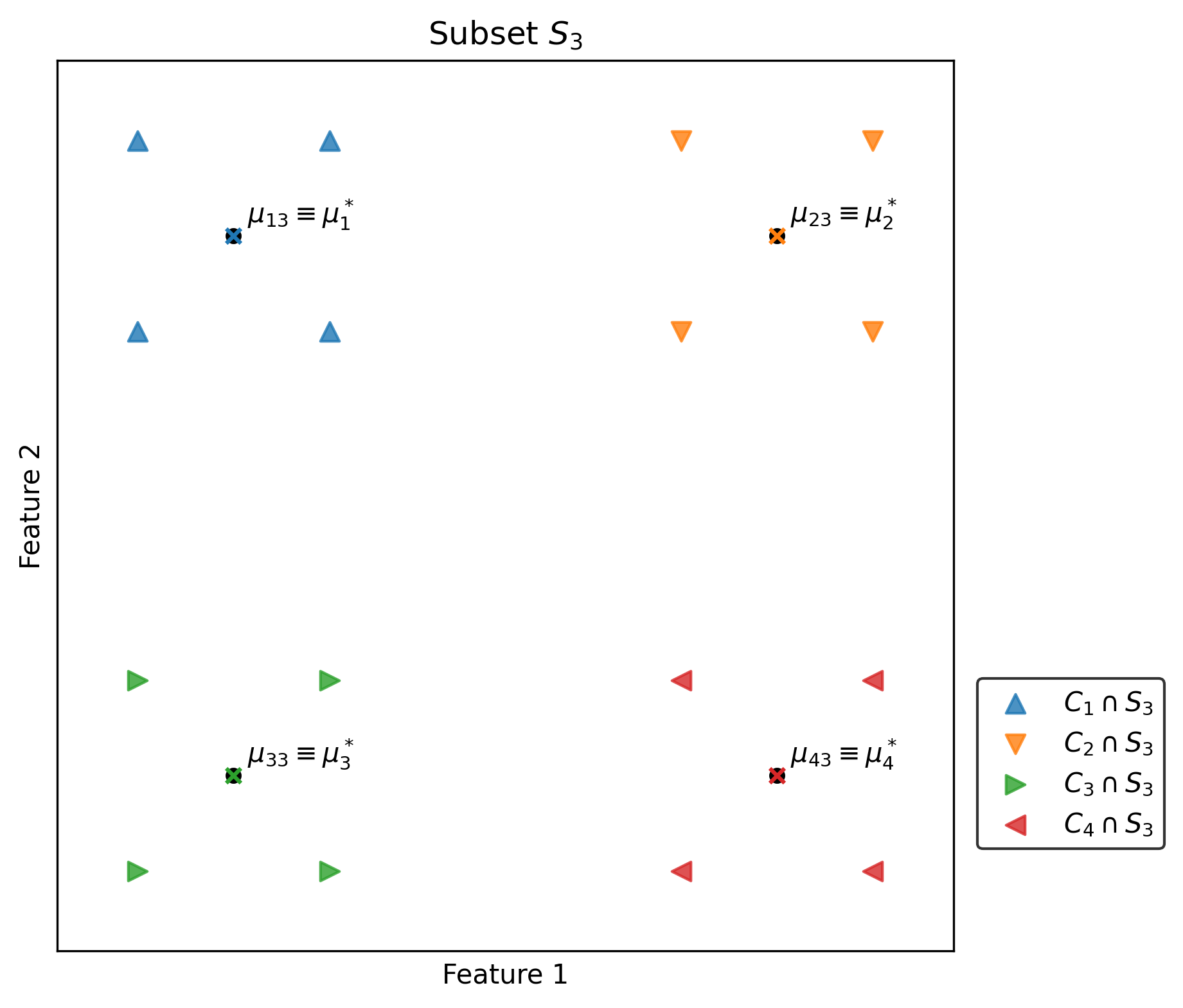}
        \label{fig:sol3}
    \end{subfigure}    
    \hfill
    \begin{subfigure}{0.4\textwidth}
        \centering
        \caption{MSSC$(S_4, 4)= 143.9185$}
        \includegraphics[scale=0.5]{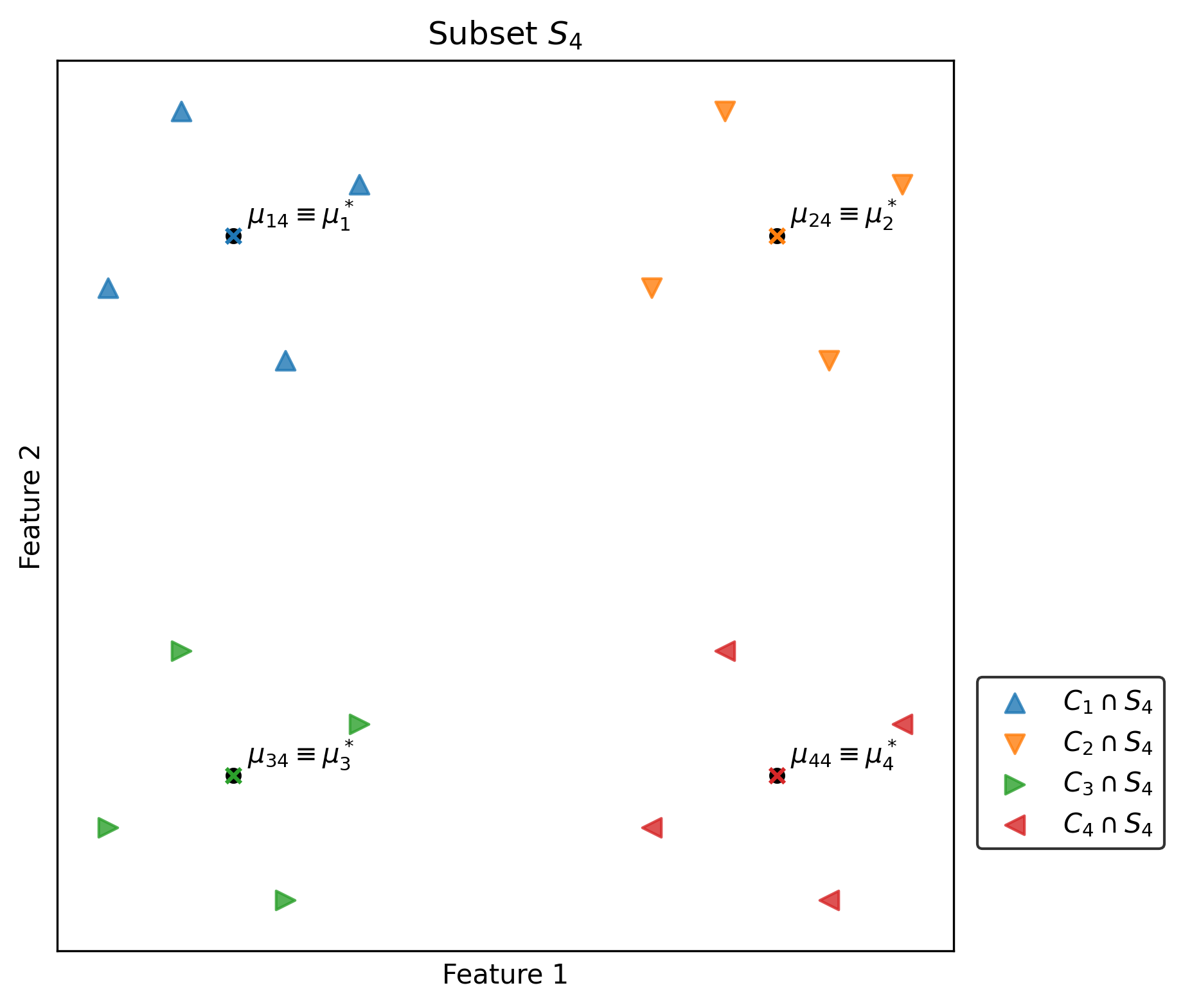}
        \label{fig:sol4}
    \end{subfigure}
    \caption{ A Partition $\mathcal{S}$ of the dataset of Figure \ref{fig:exdata} in $T=4$ subsets and corresponding optimal clustering on each subset. Here, $\textrm{MSSC}(O, K) =  575.674=\sum_{t=1}^T$MSSC$(S_t, K)$ and hence the lower bound is tight.}
    \label{fig:example2}
\end{figure}

\subsection{Practical implications}

Our idea is to produce a quality certificate for a heuristic solution whose objective value yields an upper bound on Problem \eqref{eq:MSSCcentroid}.  To this end, exploiting Proposition \ref{prop1} and the theoretical analysis described in Section \ref{sec:theoretical}, we build a partition $\mathcal{S}$ of the dataset $O$ that can be used to compute a tight lower bound on $\textrm{MSSC}(O,K)$. Computing such a lower bound requires to solve to optimality the MSSC subproblems arising in the subsets of the considered partition. However, solving to global optimality the subproblems, even though tractable with state-of-the-art solvers, can be computationally demanding. We can therefore exploit the following corollary, which is an immediate consequence of Proposition \ref{prop1}.

\begin{corollary} \label{cor}Assume the dataset $O=\{p_1,\ldots,p_N\}$ is partitioned into $T$ subsets $\mathcal{S} = \{S_1,\ldots,S_T\}$. Then
\begin{equation}\label{eq:lower_weak}
   \textrm{MSSC}_\textrm{UB}(O,K) \geq \textrm{MSSC}(O,K)\geq \displaystyle\sum_{t=1}^T  \textrm{MSSC}_\textrm{LB}(S_t,K) = LB
\end{equation}
holds for any upper bound $\textrm{MSSC}_\textrm{UB}(S_t,K)$ and lower bound $\textrm{MSSC}_\textrm{LB}(S_t,K)$ on $\textrm{MSSC}(S_t,K)$.    
\end{corollary}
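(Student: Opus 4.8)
The plan is to derive Corollary \ref{cor} as a straightforward consequence of Proposition \ref{prop1}, combined with the definitions of upper and lower bounds on the subproblems. The middle inequality, $\mathrm{MSSC}_\textrm{UB}(O,K) \geq \mathrm{MSSC}(O,K)$, is immediate: any feasible MSSC solution on $O$ (in particular, a heuristic one) has objective value at least $\mathrm{MSSC}(O,K)$, since the latter is the optimal (minimum) value. So the only substantive content is the rightmost inequality $\mathrm{MSSC}(O,K) \geq \sum_{t=1}^T \mathrm{MSSC}_\textrm{LB}(S_t,K)$.

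For that, I would chain two facts. First, Proposition \ref{prop1} gives $\mathrm{MSSC}(O,K) \geq \sum_{t=1}^T \mathrm{MSSC}(S_t,K)$ for the partition $\mathcal{S}=\{S_1,\dots,S_T\}$. Second, by definition of a lower bound, $\mathrm{MSSC}(S_t,K) \geq \mathrm{MSSC}_\textrm{LB}(S_t,K)$ for each $t\in[T]$; summing these $T$ inequalities and combining with the first fact yields $\mathrm{MSSC}(O,K) \geq \sum_{t=1}^T \mathrm{MSSC}_\textrm{LB}(S_t,K) = LB$. The appearance of $\mathrm{MSSC}_\textrm{UB}(S_t,K)$ in the statement is a red herring for the inequality chain displayed in \eqref{eq:lower_weak}: it is mentioned only to note that the corollary holds \emph{whatever} upper bounds one might also have on the subproblems (these upper bounds are used elsewhere in the algorithm, e.g. to prune or to decide not to solve a subproblem exactly), so no use of them is needed in the proof itself.

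Concretely, the proof would read: Let $\mathcal{S}=\{S_1,\dots,S_T\}$ be the given partition of $O$. For the left inequality, $\mathrm{MSSC}_\textrm{UB}(O,K)$ is by definition the objective value of some feasible $K$-partition of $O$, hence at least the minimum $\mathrm{MSSC}(O,K)$. For the right inequality, apply Proposition \ref{prop1} to obtain $\mathrm{MSSC}(O,K) \geq \sum_{t=1}^T \mathrm{MSSC}(S_t,K)$; then since $\mathrm{MSSC}_\textrm{LB}(S_t,K) \leq \mathrm{MSSC}(S_t,K)$ for every $t\in[T]$, we get $\sum_{t=1}^T \mathrm{MSSC}(S_t,K) \geq \sum_{t=1}^T \mathrm{MSSC}_\textrm{LB}(S_t,K)$. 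Transitivity closes the chain.

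There is no real obstacle here — the result is an essentially one-line monotonicity argument layered on top of Proposition \ref{prop1}, which is assumed. The only thing to be careful about is bookkeeping: making sure the partition hypothesis of Proposition \ref{prop1} ($\cup_t S_t = O$, pairwise disjoint, nonempty) is exactly the partition hypothesis invoked in the corollary, so that Proposition \ref{prop1} applies verbatim; and stating clearly that $\mathrm{MSSC}_\textrm{LB}$ and $\mathrm{MSSC}_\textrm{UB}$ denote \emph{any} valid bounds, so the inequalities $\mathrm{MSSC}_\textrm{LB}(S_t,K) \leq \mathrm{MSSC}(S_t,K)$ hold by definition. I would keep the proof to two or three sentences.
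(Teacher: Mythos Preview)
Your proposal is correct and matches the paper's approach: the paper presents Corollary~\ref{cor} without an explicit proof, simply noting it is ``an immediate consequence of Proposition~\ref{prop1}.'' Your two-step chain (Proposition~\ref{prop1} plus the definition of valid lower/upper bounds) is exactly the intended argument.
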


Clearly, $LB^*$ in Proposition \ref{prop1} is greater than or equal to $LB$. Moreover, while the quality of $LB^*$ depends only on the partition ${\cal S}$, the quality of $LB$ depends both on ${\cal S}$ and on the strength of the lower bounds $\textrm{MSSC}_\textrm{LB}(S_t,K)$ computed for the subproblems. The recently introduced exact solver \textsc{SOS}-\textsc{SDP}, proposed by \cite{piccialli2022sos}, has significantly advanced the state of the art in solving the MSSC problem. It combines SDP relaxations with a branch-and-cut framework, producing strong and computationally efficient bounds. Empirical evidence shows that \textsc{SOS}-\textsc{SDP} can solve problems to proven optimality of substantially larger size than previously possible, handling instances with up to about 4,000 data points. At the same time, its performance at the root node of the branch-and-bound tree is particularly effective: for problems of up to roughly 1,000 points, the solver computes very tight bounds within a reasonable time, often with only a negligible optimality gap. This makes \textsc{SOS}-\textsc{SDP} the most suitable choice both for solving the subproblems exactly and for producing the lower bounds $\textrm{MSSC}_\textrm{LB}(S_t,K)$ in Corollary~\ref{cor}. In the following, we restrict each subset $S_t$ to about 1,000 data points, ensuring that the subproblems remain tractable while preserving the quality of the resulting lower bounds.

Now, we focus our efforts on the following research question: what is the partition $\mathcal{S} = \{S_1, \dots, S_T\}$ of $O$ into $T$ subsets that maximizes the lower bound obtained by summing up the MSSC objective computed on each subset? 

To define the partition $\mathcal{S}$, we need to choose the number of subsets $T$ and the cardinality of each subset $S_t$. Although determining the optimal clustering for each subset can be fast if the subsets are small, smaller subproblems often result in poor bounds (e.g., a subproblem with $K$ observations has an objective value of MSSC equal to zero). Anyway, the size of the subproblems should allow for efficient computation. Note that a very good lower bound can be found by producing a partition with a small number of sets $T$ where most of the data set is contained in one set, and the other sets contain only a very small number of points.
An extreme case where the lower bound in Eq. \eqref{eq:lower} provides almost zero optimality gap is if $T=2$, $S_1$ is composed of all points apart from $k$, and $S_2$ contains only the $k$ points closest to the centers of the optimal clusters. However, this partition is useless since computing a valid lower bound on $S_1$ is as difficult as on the original dataset. 

For practical efficiency, the computational effort spent by the solver on each subset $S_t$ should be comparable. This balance not only avoids bottlenecks but also enables efficient parallelization. To this end, we focus on the equipartition case, where the dataset is divided into subsets of (approximately) equal size, and this size is chosen to remain within the tractable regime. In this way, either the exact solution of the subproblem can be obtained, or the lower bound in Eq. \eqref{eq:lower_weak} can be computed within limited computational time. Consequently, we restrict our attention to partitions into subsets of equal size, with each subset containing fewer than 1,000 points. When the subsets are of comparable size, the lower bounds $\textrm{MSSC}_\textrm{LB}(S_t,K)$ can be computed efficiently across all subproblems. Moreover, if the partition is of good quality, each subset contributes meaningfully to the overall lower bound, avoiding cases where some subsets add only marginal improvement.
So, the research question now becomes: what is the partition $\mathcal{S} = \{S_1, \dots, S_T\}$ of the dataset $O$ into $T$ subsets of equal size maximizing the lower bound obtained by summing up the MSSC objective computed on each subset?

Formally, we would like to solve the following optimization problem:
\begin{subequations}
\label{eq:PART}
\begin{align}
\textrm{LB} = \max_{}~ & \sum_{t=1}^T \textrm{MSSC}(S_t,K) \\\label{eq:asspart}
\text{s.t.}~ & \sum_{t=1}^T \xi_{it} = 1,\quad \forall i \in [N],\\\label{eq:cardpart}
& \sum_{i=1}^N \xi_{it}  =  \frac{N}{T}, \quad \forall t \in [T],\\
& \xi_{it} \in \{0,1\}, \quad \forall i \in [N], \ \forall t \in [T],
\end{align}
\end{subequations}
where the binary variables $\xi_{it}$ are equal to 1 if the point $p_i$ is assigned to the subset $S_t$. Constraints \eqref{eq:asspart} require that each point is assigned to a single subset, constraints \eqref{eq:cardpart} require that each subset $S_t$ has a number of elements equal to $N/T$. Note that for simplicity, we assume that $N$ can be divided by $T$. The objective function is the solution of the following problem:
\begin{subequations}
\label{eq:MSSC}
\begin{align}
\textrm{MSSC}(S_t, K) = \min_{} & \sum_{j=1}^K \frac{\sum_{i=1}^{N} \sum_{i'=1}^N \delta^t_{ij} \delta^t_{i'j} ||p_i - p_{i'}||^2 }{2 \, \cdot \sum_{i=1}^N \delta^t_{ij}}\\\label{eq:connect}
\text{s.t.}~ & \sum_{j=1}^K \delta^t_{ij} = \xi_{it},\quad \forall i \in [N],\\
& \sum_{i=1}^N \delta^t_{ij}  \ge 1, \quad \forall j \in [K],\\
& \delta^t_{ij} \in \{0,1\}, \quad \forall i \in [N], \ \forall j \in [K]\; 
\end{align}
\end{subequations}
that is, the MSSC problem on each subset $S_t$. The two problems are linked by constraints \eqref{eq:connect}, which impose that the data points assigned to the subset $S_t$ must be assigned to the $K$ clusters.

However, Problem \eqref{eq:PART} is not directly solvable, due to its bilevel nature. This undesirable bilevel nature could be avoided if the optimal clustering for any subset were known in advance and independent of the chosen partition. In this case, the optimization variables of the second level would be known, and the problem would become a single-level one. For instance, this situation arises when Assumption~\ref{ass0} holds. In fact, Assumption~\ref{ass0} states that the restriction $\mathcal{P}^*|_{S_t}$ of the global optimal solution $\mathcal{P}^*$ remains optimal for each subset $S_t$, for all $t \in [T]$. However, in practice, we do not have access to $\mathcal{P}^*$ and instead rely on a heuristic solution $\mathcal{P}=\{C_1,\ldots,C_K\}$ with objective value $\textrm{MSSC}_{\textrm{UB}}(O,K)$. From this heuristic solution, we assign the lower-level variables $\delta^t_{ij}$ by restricting $\mathcal{P}$ to the subsets $S_t$, i.e., using $\mathcal{P}|_{S_t}$ for each $t \in [T]$. With this assignment, Problem~\eqref{eq:PART} becomes:

\begin{subequations}
\label{eq:lui}
\begin{align}
\textrm{LB} = \max_{} &  \sum_{t=1}^{T} \sum_{j=1}^K \sum_{i: p_i\in C_j} \sum_{i': p_{i'} \in C_j} \frac{\|p_i-p_{i'}\|^2}{2 \, \cdot} |C_j|/T \cdot \xi_{it} \cdot \xi_{i't}  \\
\text{s.t.}~ & \sum_{t=1}^T \xi_{it} = 1,\quad \forall i \in [N],\\
& \sum_{i:p_i\in C_j} \xi_{it} =  \frac{|C_j|}{T}, \quad \forall t \in [T], \ \forall j \in [K],\\
& \xi_{it} \in \{0,1\}, \quad \forall i \in [N], \ \forall t \in [T],
\end{align}
\end{subequations}
where, w.l.o.g., we assume that the number of points in each cluster $C_j$, for $j \in [K]$, is a multiple of $T$; that is, $|C_j|/T \in \mathbb{Z}$. Problem \eqref{eq:lui} can be decomposed in $K$ independent subproblems, one for each cluster $C_j$ with $j \in [K]$:
\begin{subequations}
\label{eq:cls}
\begin{align}
\max_{} &  \sum_{t=1}^{T} \sum_{i: p_i\in C_j} \sum_{i': p_{i'} \in C_j} \frac{\|p_i-p_{i'}\|^2}{2 \, \cdot |C_j|/T} \cdot \xi_{it} \cdot \xi_{i't}\\
\text{s.t.}~ & \sum_{t=1}^T \xi_{it} = 1,\quad \forall i:p_i \in C_j,\\
& \sum_{i: p_i \in C_j} \xi_{it} =  \frac{|C_j|}{T}, \quad \forall t \in [T],\\
& \xi_{it} \in \{0,1\}, \quad \forall i \in C_j\; \forall t \in [T].
\end{align}
\end{subequations}
Note that, the objective function of Problem~\eqref{eq:cls} is expressed in terms of pairwise distances. By applying Huygens’ theorem \citep{edwards1965method}, we can first reformulate it in terms of distances to centroids. Then, by applying Lemma~\ref{lem:anova}, the objective can be interpreted as the sum of squared distances between the centroids of the clustering solution on the full dataset $O$ and the corresponding centroids of the clustering solutions on the subsets $S_t$, for all $t \in [T]$. Hence, solving Problem~\eqref{eq:cls} is equivalent to minimizing the gap in Eq.~\eqref{eq:gap-master}.

Problem \eqref{eq:cls} is related to the so-called ``anticlustering problem'', first defined in \cite{spath1986anticlustering}. The goal is to partition a set of objects into groups with high intragroup dissimilarity and low intergroup dissimilarity, meaning that the objects within a group should be as diverse as possible, whereas pairs of groups should be very similar in composition.  In \cite{spath1986anticlustering}, the objective was to maximize rather than minimize the within-group sum-of-squares criterion, which translates into Problem \eqref{eq:cls}. 
The term ``anticlustering'' is used in psychological research, where many applications require a pool of elements to be partitioned into multiple parts while ensuring
that all parts are as similar as possible.  As an example, when dividing students into groups in a school setting, teachers are often interested in assembling groups that are
similar in terms of scholastic ability and demographic composition. In educational
psychology, it is sometimes necessary to divide a set of test items into parts of equal length that are presented to different cohorts of students. To ensure fairness of the test, it is required that all parts be equally difficult. 

The anticlustering problem is also considered in
\cite{papenberg2021using,brusco2020combining,papenberg2024k}, where different diversity criteria and heuristic algorithms are proposed. Problem \eqref{eq:cls} can also be related to the Maximally Diverse Grouping Problem (MDGP) \citep{LAI2016780,schulz2023balanced,schulz2023balanceda,SCHULZ202142}, where the set of vertices of a graph has to be partitioned into disjoint subsets of sizes within a given interval, while maximizing the sum of the edge weights in the subset. In this context, the weights represent the distances between the nodes. MDGP belongs to the category of vertex-capacitated clustering problems
\citep{johnson1993min}.
From now on, we call \textit{anticlusters} the subsets $S_t$ for $t\in[T]$.

Note that even if Assumption \ref{ass0} holds, the anticlustering partition can significantly influence the quality of the lower bound.
Consider again the dataset in Figure \ref{fig:exdata}. We know from Figure \ref{fig:example2} that we can get a zero optimality gap by choosing the right partition $\mathcal{S}$. However, if we choose the wrong partition we can fail in certifying the optimality and get a large gap.
Indeed, Figure~\ref{fig:example} shows an alternative anticlustering partition. Each subfigure corresponds to one of the four anticlusters $S_t$ $(t=1,\dots,4)$ and displays the points assigned to $S_t$, the clustering solution restricted to $S_t$, and the corresponding centroids $\mu_{jt}$ for each cluster $j$ $(j=1,\dots,4)$. In this case, the centroids $\mu_{jt}$ are far from the optimal cluster centroids $\mu_j^*$ and the contribution of each subproblem to the overall lower bound is small. The resulting bound is $\sum_{t=1}^T \text{MSSC}(S_t,K) = 102.86$, which leads to a large optimality gap of about $82\%$.

This simple example shows the need to carefully choose the anticluster partition by solving Problem \eqref{eq:cls}. However, Problem \eqref{eq:cls} can be solved exactly only for very small-sized instances; therefore, we define an \textit{ad hoc} heuristic for the problem. Note that we define Problem \eqref{eq:cls} using the restriction of the starting heuristic solution, without knowing whether it is optimal. Furthermore, even if the solution were optimal, Assumption \ref{ass0} may fail, so that its restriction may not be optimal. However, the lower bound in Eq. \eqref{eq:lower_weak} is valid for any partition $\mathcal{S}$. The partition obtained by solving Problems \eqref{eq:cls} seeks a partition with high intra-anticluster dissimilarity and low inter-anticluster dissimilarity, thereby improving the contribution of the corresponding MSSC subproblems to the overall bounds. As a result, the quality of the lower bound remains high even when Assumption \ref{ass0} does not hold and the input clustering is not optimal, as shown by the numerical experiments in Section \ref{sec:results}.

\begin{figure}[!ht]
    \centering
    \begin{subfigure}{0.4\textwidth}
        \caption{MSSC$(S_1, 4)= 25.715$}
        \includegraphics[scale=0.5]{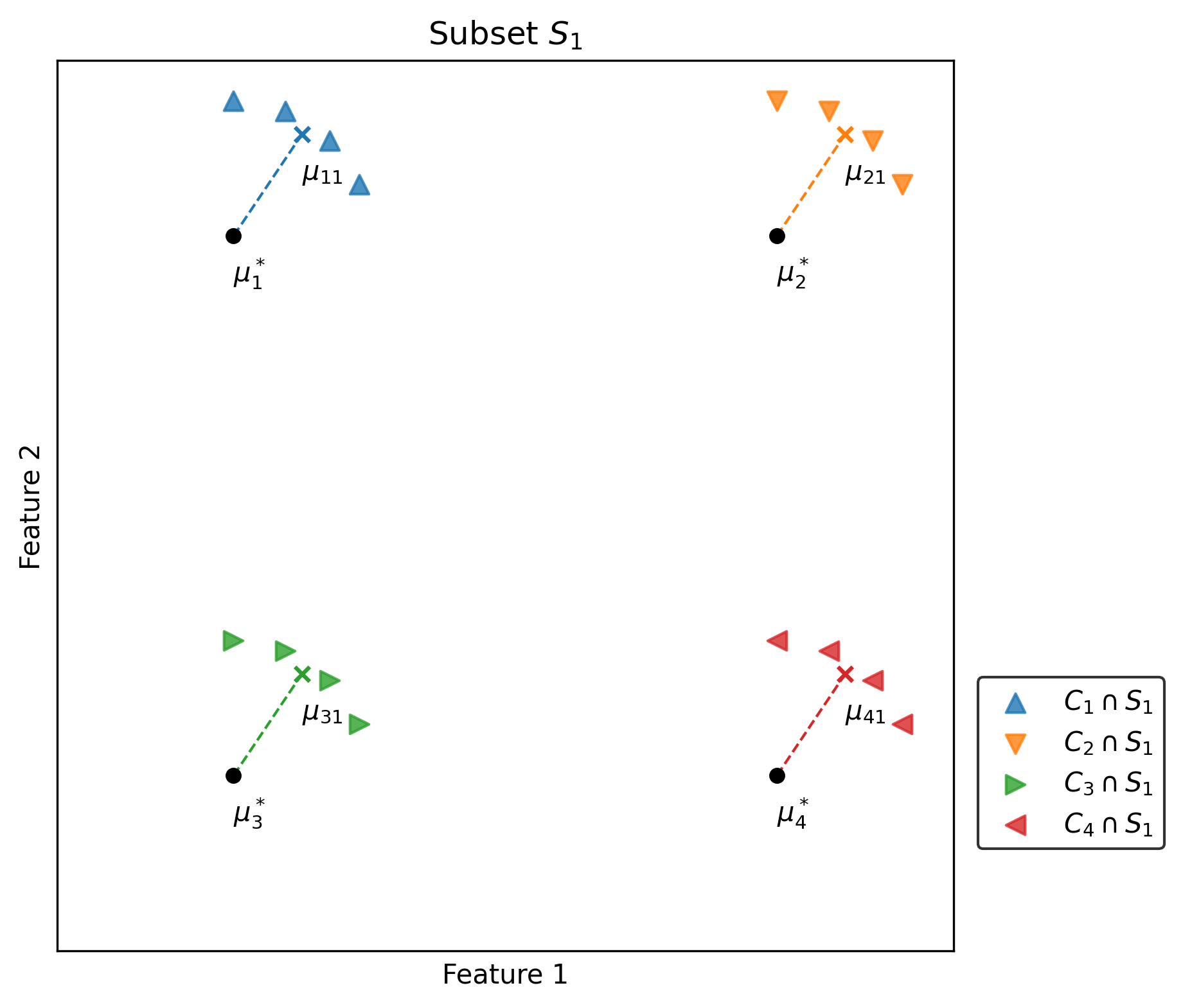}
        \label{fig:sol1w}
    \end{subfigure}
    \hfill
    \begin{subfigure}{0.4\textwidth}
        \caption{MSSC$(S_2, 4)= 25.715$}
        \includegraphics[scale=0.5]{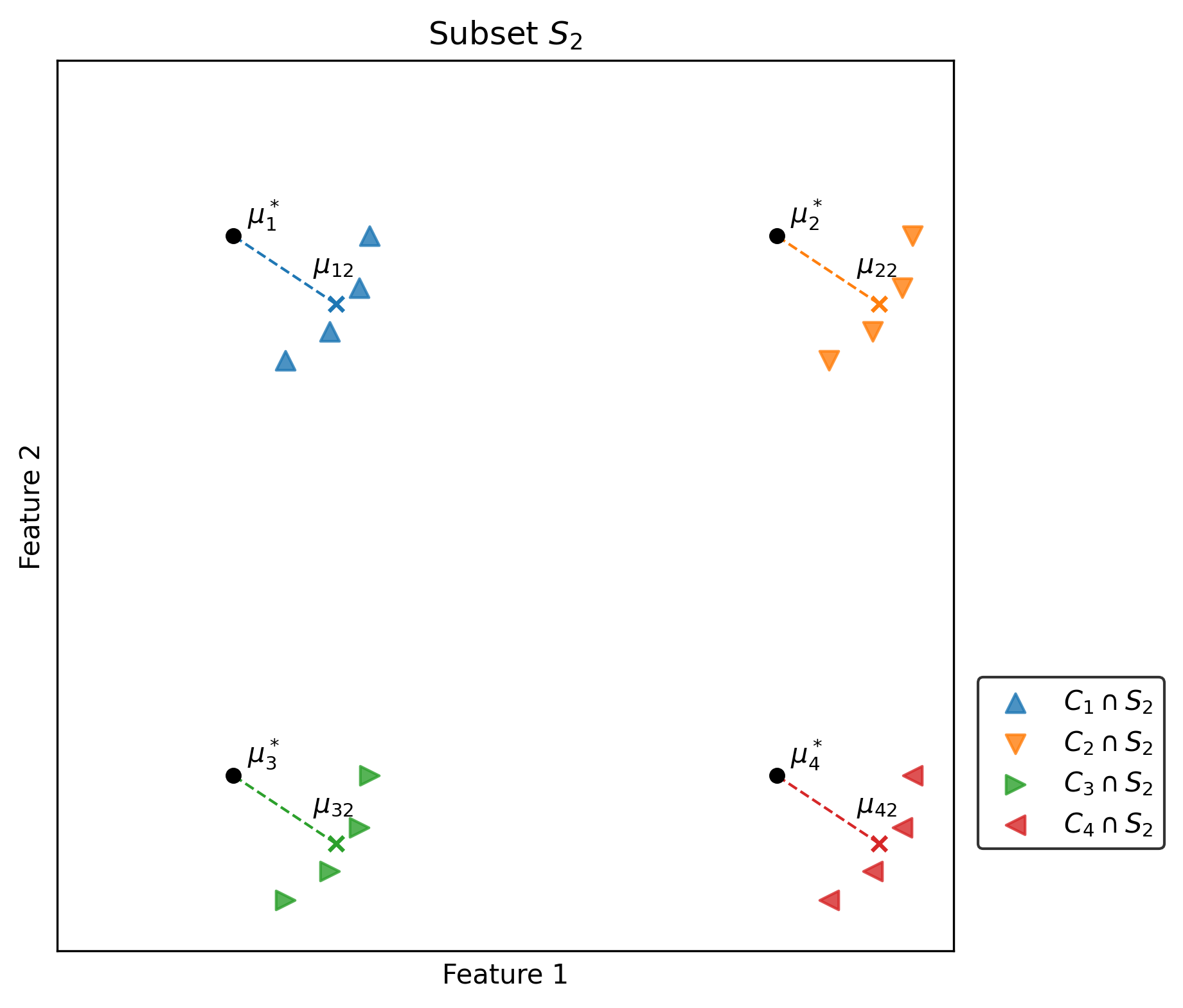}
        \label{fig:sol2w}
    \end{subfigure}\\
    \begin{subfigure}{0.4\textwidth}
        \centering
        \caption{MSSC$(S_3, 4)= 25.715$}
        \includegraphics[scale=0.5]{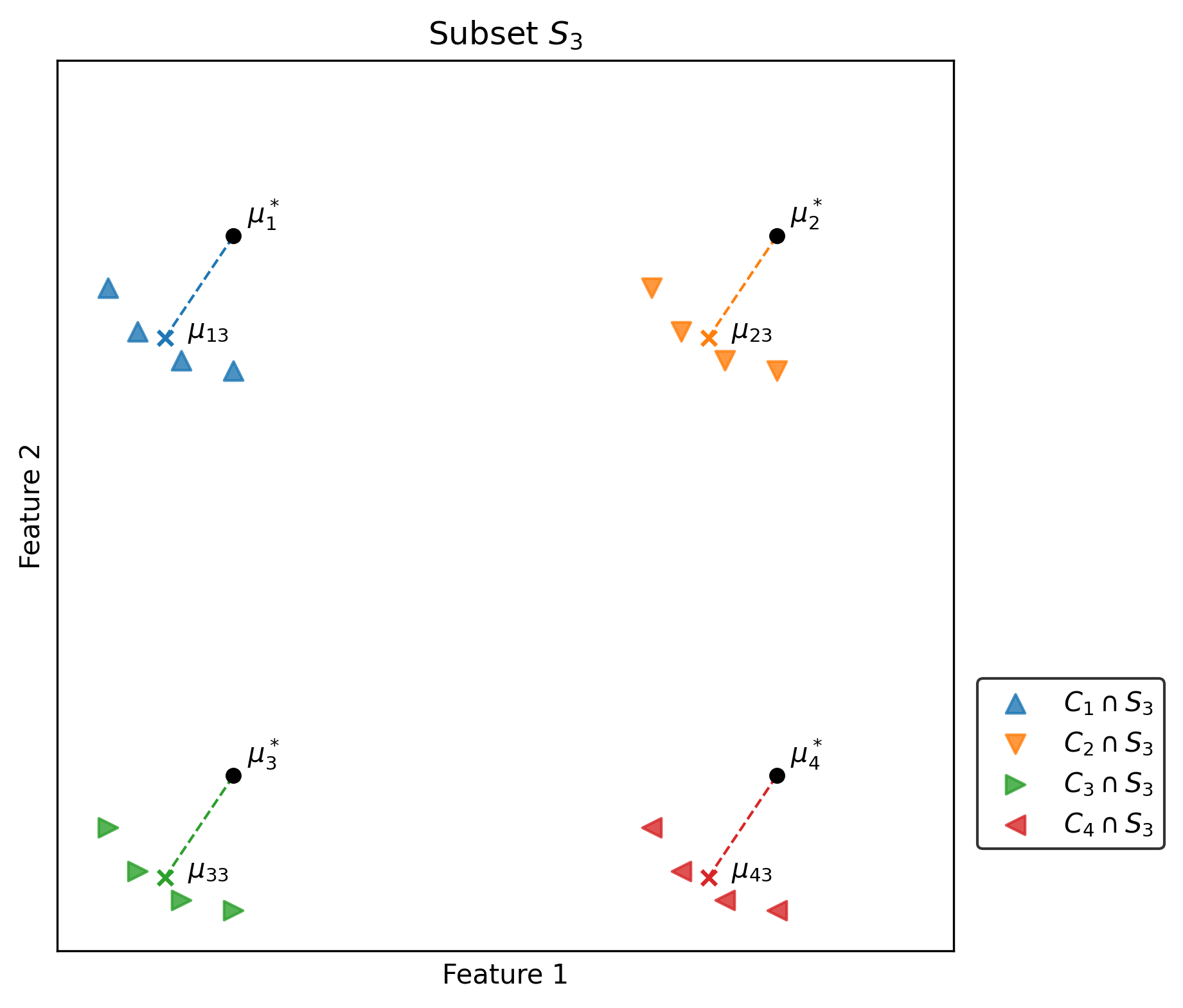}
        \label{fig:sol3w}
    \end{subfigure}
    \hfill
    \begin{subfigure}{0.4\textwidth}
        \centering
        \caption{MSSC$(S_4, 4)= 25.715$}
        \includegraphics[scale=0.5]{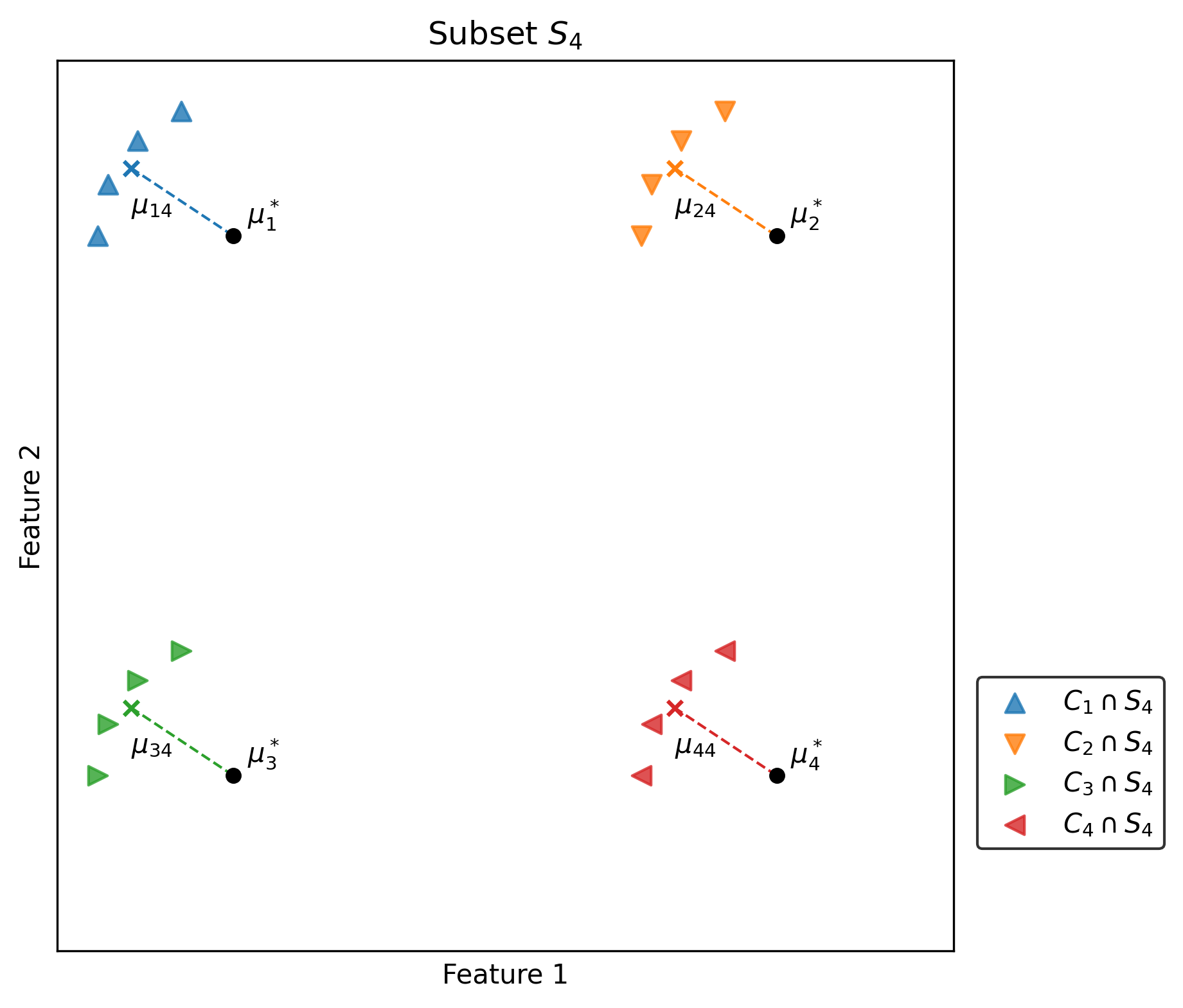}
        \label{fig:sol4w}
    \end{subfigure}
    \caption{A clustering partition for a dataset of $N = 64$ points, with $K = 4$ clusters and $T=4$ anticlusters. The optimal value satisfies $\textrm{MSSC}(O,K) =  575.674$, $\sum_{t=1}^T$MSSC$(S_t, K) = 102.86$ and the gap is $82\%$.}
    \label{fig:example}
\end{figure}

\section{\textsc{AVOC}: Anticlustering Validation Of Clustering}  \label{sec:avoc}
We propose a heuristic algorithm, named Anticlustering Validation Of Clustering (\textsc{AVOC}), that validates a given clustering partition $\mathcal{P} = \{C_1, \ldots, C_K\}$ of the clustering problem with $K$ clusters on a set of points $O$ with objective function value $UB:=\textrm{MSSC}_\textrm{UB}(O, K)$.
The purpose of \textsc{AVOC} is to assess the quality of feasible clustering solutions by computing a valid lower bound and the corresponding optimality gap, thereby providing an a posteriori validation even for large-scale instances where exact optimization is impractical.
The method operates through the iterative refinement of an initial anticlustering partition $\mathcal{S} = \{S_1, \dots, S_T\}$, using a swap procedure designed to maximize the objective of Problem~\eqref{eq:lui}.
Specifically, the algorithm includes the following key components: (i) an initialization procedure to produce a feasible anticlustering partition; (ii) an evaluation mechanism that produces an approximation $LB^+$ for the value $LB$; (iii) a heuristic mechanism that iteratively refines the current anticlustering partition to improve the value of $LB^+$; (iv) an algorithm that, given an anticlustering partition, produces a lower bound on the value $\textrm{MSSC}(O,K)$. A schematic representation of the \textsc{AVOC} algorithm is given in Figure \ref{fig:flow}. The pseudocode is outlined in Algorithm 1.

\begin{figure}[]
    \centering
    \scalebox{0.65}{
    \begin{tikzpicture}[scale=0.5]
      \node [io] (input) {{Input clustering $\mathcal{P}$ with value $UB$, centroids $\mu_j$ for $j \in [K]$, minimum gap $\epsilon_\gamma$,\\ number of starts $R$,\\
      number of anticlusters $T$}};
    
      \node [block, below of=input, node distance=2.5cm] (init) {Initialize candidate partitions\\
      $\mathcal{S}^{(r)}=\{S^{(r)}_1,\ldots,S^{(r)}_T\}$ for $r\in[R]$};
    
      \node [block, below of=init, node distance=2.5cm] (selectbest) {Evaluate each candidate $\mathcal{S}^{(r)}$ using\\
      $LB^+(\mathcal{S}^{(r)},K)=\sum_{t=1}^{T} LB^+(S^{(r)}_t,K)$\\
      Select best candidate $\mathcal{S}$ such that\\
      $r^\star=\arg\max_r LB^+(\mathcal{S}^{(r)},K)$};
    
      \node [block, below of=selectbest, node distance=2.8cm] (seed) {
      Set $LB^+=LB^+(\mathcal{S}^{(r^\star)},K)$ and $\,\gamma^+=\frac{UB-LB^+}{UB}$};
    
      \node [decision, below of=seed, node distance=3cm] (evaluate) {$\gamma^+ \leq \epsilon_{\gamma}$\, ?};
    
      \node [decision, below of=evaluate, node distance=3cm] (time) {Time limit?};
    
      \node [block, below of=time, node distance=3cm] (ubsol) {Create a new partition $\mathcal{S}'=\{S'_1,\ldots,S'_T\}$\\ via swap procedure};
    
      \node [block, below of=ubsol, node distance=2.5cm] (differs) {Evaluate $\mathcal{S}'$\\
      Compute $LB^+(S'_t,K)$ for each $t\in[T]$};
    
      \node [decision2, below of=differs, node distance=3cm] (update) {$\sum_{t=1}^{T} LB^+(S'_t,K) > LB^+$\, ?};
    
      \node [block, left of=update, node distance=9cm] (evaluate2) {
        Set $LB^+=\sum_{t=1}^{T} LB^+(S'_t,K)$\\
        Set $\mathcal{S}=\mathcal{S}'$ and update $\gamma^+$\\
      };
    
      \node [block, right of=update, node distance=9cm] (compute) {Compute for each $t\in[T]$\\
      MSSC$_\textrm{LB}(S_t,K)$};
    
      \node [start, below of=compute, node distance=3cm] (stop) {Return\\[1.5ex]
      $\gamma_\textrm{\tiny LB} = \frac{UB - \sum_{t=1}^T \textrm{MSSC}_\textrm{LB}(S_t,K)}{UB}$};
    
      \path [line] (input) -- (init);
      \path [line] (init) -- (selectbest);
      \path [line] (selectbest) -- (seed);
      \path [line] (seed) -- (evaluate);
      \path [line] (evaluate) -- node[label=\textsc{\scriptsize NO}, yshift=-0.7em, xshift=-1em] {} (time);
      \path [line] (time) -- node[label=\textsc{\scriptsize NO}, yshift=-0.7em, xshift=-1em] {} (ubsol);
      \path [line] (time) -| node[label=\textsc{\scriptsize YES}, yshift=-0.2em, xshift=-12em] {} (compute);
      \path [line] (ubsol) -- (differs);
      \path [line] (differs) -- (update);
      \path [line] (update) -- node[label=\textsc{\scriptsize YES}, yshift=-0.2em, xshift=0em] {} (evaluate2);
      \path [line] (update) -- node[label=\textsc{\scriptsize NO}, yshift=-0.2em, xshift=0em] {} (compute);
      \path [line] (evaluate2) |- (evaluate);
      \path [line] (evaluate) -| node[label=\textsc{\scriptsize YES}, yshift=-0.2em, xshift=-12em] {} (compute);
      \path [line] (compute) -- (stop);
    
      \node[right of=selectbest, node distance=4cm]{};
      \node[above of=init]{};
    
    \end{tikzpicture}
    }
    \caption{Flowchart of the \textsc{AVOC} algorithm.}
    \label{fig:flow}
\end{figure}

\subsection{Initialize the \textsc{AVOC} algorithm}

Given a clustering partition $\mathcal{P}$, we generate an initial anticlustering partition $\mathcal{S} = \{S_1, \dots, S_T\}$. The process begins by randomly distributing an equal number of points from each cluster $j \in [K]$, that is $|C_j|/T$ points, into subsets $S_{jt}$ for $t \in [T]$. These subsets are then combined to form an initial anticlustering partition $\mathcal{S}$. To ensure feasibility, each anticluster $S_t$ must include exactly one subset from each cluster.
If Assumption \ref{ass0} holds, all possible combinations of the subset will lead to the same lower bound value. However, if Assumption \ref{ass0} does not hold, different combinations of subsets can affect the quality of the resulting $LB^+$ and hence of $LB$. For this reason, a mixed-integer linear programming (MILP) model is used to determine the optimal combination. The model reads as follows:
\begin{subequations}
\label{eq:mount}
\begin{align}
\label{obj} \max & \sum_{t=1}^{T} \sum_{m=1}^{T} \sum_{m'=m}^{T} \sum_{j=1}^{K} \sum_{j'=j+1}^{K} d_{jj'mm'} \, y^{t}_{jj'mm'}  \\
\label{c:one} \text{s.t.}~ & \sum_{t=1}^T x_{jm}^{t} = 1,\quad \forall j \in [K], \, \forall m \in [T],\\
\label{c:two} & \sum_{m=1}^T x_{jm}^{t} = 1,\quad \forall j \in [K], \, \forall t \in [T],\\
\label{c:three} & y^{t}_{jj'mm'} \leq x_{jm}^{t}, \quad \forall j, j' \in [K], \, \, j<j', \ \forall m,m',t \in [T], \ m \leq m',\\
& y^{t}_{jj'mm'} \leq x_{j'm'}^{t}, \quad \forall j, j' \in [K], \ j<j', \ \forall m,m',t \in [T], \ m \leq m',\\
\label{c:four} & y^{t}_{jj'mm'} \geq x_{jm}^{t} + x_{j'm'}^{t} - 1, \quad \forall j, j' \in [K], \ j<j', \ \forall m,m',t \in [T], \ m \leq m',\\
& x_{jm}^{t} \in \{0,1\}, \quad \forall j \in [K], \ \forall  m,t\in [T], \\
& y^{t}_{jj'mm'} \in \{0,1\}, \quad \forall j, j' \in [K], \ j<j', \ \forall m,m',t \in [T], \ m \leq m'.
\end{align}
\end{subequations}
The binary variable $x_{jm}^{t}$ is equal to 1 if subset $S_{jm}$ is assigned to anticluster $t$, and 0 otherwise. Furthermore, the binary variable $y^{t}_{jj'mm'}$ is equal to 1 if the subsets $S_{jm}$ and $S_{j'm'}$, with $j < j'$, are assigned to anticluster $t$, and 0 otherwise. Since we want to form anticlusters with data points from different clusters as far away as possible, the objective function \eqref{obj} maximizes the between-cluster distance for the subsets that belong to the same anticluster.
The distance between two subsets $S_{jm}$ and $S_{j'm'}$, $\forall j, j' \in [K], \, j < j'$ and $\forall m, m' \in [T]$, is computed as:
\[
d_{jj'mm'} = \sum_{p_i \in S_{jm}} \sum_{p_{i'} \in S_{j'm'}} \|p_i - p_{i'}\|^2.
\]
Constraints \eqref{c:one} state that each subset $S_{jm}$ must be assigned to exactly one anticluster; constraints \eqref{c:two} ensure that for each anticluster exactly one subset of points from cluster $j$ is assigned.
Constraints \eqref{c:two}-\eqref{c:three} help track the set of points $S_{jm}$ and $S_{j'm'}$ that are in the same anticluster $m$. Each feasible solution $\bar{x}^t_{jm}$ for all $j\in[K], m,t\in [T]$ of Problem \eqref{eq:mount} represents an anticlustering partition that can be obtained by setting $S_{t} = \bigcup_{j \in [K], \ m \in [T] \, : \,  \bar{x}^t_{jm}=1} S_{jm} \,$ for every $t \in [T]$.

To further improve the initialization, we adopt a multi-start procedure: the initialization step is repeated $R$ times, each time producing a feasible anticlustering partition $\mathcal{S}^{(r)}=\{S^{(r)}_1,\ldots,S^{(r)}_T\}$, with $r\in[R]$, according to the rules described above. These candidate partitions are then given as input to the evaluation phase.

\subsection{Evaluate an Anticlustering Partition}\label{sec:evaluate}

To evaluate an anticlustering partition $\mathcal{S}$ using the lower bound MSSC$_\textrm{LB}(\mathcal{S},K)$ it is necessary to either solve to optimality or compute a valid lower bound for the optimal value of Problem \eqref{eq:MSSC_intdata} with $K$ clusters for each anticluster $S_t$. However, since this approach can be computationally expensive (potentially requiring the solution of SDPs), we propose an alternative method to approximate the lower bound efficiently.

Given an anticlustering partition $\mathcal{S}$ and a clustering partition $\mathcal{P}$ characterized by centroids $\mu_1, \dots, \mu_K$, the $k$-means algorithm is applied to each anticluster $S_t$ in $\mathcal{S}$, using $\mu_1, \dots, \mu_K$ as the initial centroids.
For each $S_t$, this process produces a clustering partition $\mathcal{P}_t = \{C_1^t, \dots, C_K^t\}$ with value $LB^+(S_t, K)$, where the set $C_j^t$ denotes the points of cluster $j$ in the anticluster set $S_t$. An upper approximation on the lower bound MSSC$_\textrm{LB}(\mathcal{S},K)$ is then given by
$$
LB^+(\mathcal{S}) = \sum_{t=1}^T LB^+(S_t, K) \, .
$$
Among the candidate partitions generated in the initialization phase, the one with the largest value of $LB^+(\mathcal{S}^{(r)})$ is retained as the initial solution. We set
$\mathcal{S} = \mathcal{S}^{(r^\ast)}$, where $r^\ast = \arg\max_{r\in[R]} LB^+(\mathcal{S}^{(r)})$, and define $LB^+ = LB^+(\mathcal{S}^{(r^\ast)})$.
The best candidate provides the starting point for the swap procedure (see Section~\ref{sec:swap}), {and yields the following estimate of the optimality gap:
\begin{equation}\label{eq:gamma+}
    \gamma^+ = \frac{UB - LB^+}{UB}\,,
\end{equation}
which is used to guide the \textsc{AVOC} procedure.
}

\subsection{Apply a Swap Procedure}\label{sec:swap}

Given an initial anticlustering partition $\mathcal{S} = \{S_1, \dots, S_T\}$ {with value $LB^+$} and clustering partitions $\mathcal{P}_t = \{C_1^t, \dots, C_K^t\}$ for every $t \in [T]$, we aim to improve $\mathcal{S}$ by swapping data points between different anticlusters.
A new anticlustering partition $\mathcal{S}'$ is then created by swapping two points $p_i \in C_j^t$ and $p_{i'} \in C_j^{t'}$, with $t \neq t'$, where both points belong to the same cluster $j \in [K]$ but are located in different anticlusters.

To guide this process, for each cluster $j \in [K]$, we rank the anticlusters based on their contribution to the current value $LB^+$. These contributions, denoted as $LB^+_{tj}$, are calculated using the intra-cluster distances of points within $C_j^t$ as follows:
$$
LB^+_{tj} = \sum_{i:p_i \in C^t_j} \sum_{i':p_{i'} \in C^t_j} \frac{\|p_i - p_{i'}\|^2}{2 \, \cdot |C_j^t|} .
$$
Furthermore, in each subset $C_j^t$, we order the points by their distance from the centroid $\mu_j$ of the cluster $j$ in the original clustering partition $\mathcal{P}$. This ordering allows us to prioritize swaps that are more likely to improve partition quality.

The algorithm proceeds by considering swaps between the points in the anticluster with the lowest $LB^+_{tj}$ which are closest to $\mu_j$, and the points in the anticluster with the highest $LB^+_{t'j}$ which are furthest from $\mu_j$.
This approach serves two purposes: balancing contributions across anticlusters and maximizing the potential impact of the swap. By swapping a distant point with a low-contributing point, the algorithm can effectively increase the objective function $LB^+$ while maintaining balanced contributions  $LB^+(S_t, K)$ across anticlusters.
Given a cluster $j \in [K]$ and a pair of points such that $p_i \in C^{t}_j$ and $p_{i'} \in C^{t'}_j$, a new candidate anticlustering partition $\mathcal{S}' = \{S'_1, \dots, S'_T\}$ is constructed as follows: 
\begin{itemize}
    \item $S'_m = S_m$, for all $m\in [T]\setminus \{t, t'\}$
    \item $S'_t = S_t \setminus \{p_{i}\} \cup \{p_{i'}\}$
    \item $S'_{t'} = S_{t'} \setminus \{p_{i'}\} \cup \{p_{i}\}$
\end{itemize}
The new partition $\mathcal{S}'$ is then evaluated (see Section \ref{sec:evaluate}).
If the swap results in a higher solution value, i.e., $\sum_{t=1}^T LB^+(S'_t, K) > LB^+$, the current partition and  lower bound estimate are updated:
$$\mathcal{S} = \mathcal{S}' \qquad \textrm{and} \qquad LB^+ = \sum_{t=1}^T LB^+(S_t, K)$$
and the gap $\gamma^+$ is recomputed.
The iterative procedure continues until one of the following conditions is met: the solution gap $\gamma^+$ reaches the desired threshold $\epsilon_{\gamma}$, a fixed time limit is reached ({\small T/O}), or no improvement is achieved after evaluating all possible swaps (\textsc{H}).

\begin{algorithm}
\small
\caption{Anticlustering Validation of Clustering (\textsc{AVOC})}
\KwIn{Clustering $\mathcal{P} = \{C_1, \dots, C_K\}$ with value $UB := \textrm{MSSC}_\textrm{UB}(O, K)$, centroids $\mu_1, \dots, \mu_K$, minimum gap $\epsilon_{\gamma}$, number of candidate initial partitions $R$, number of anticlusters $T$}
\KwOut{Solution gaps $\gamma_\textrm{\tiny LB}$}

\bigskip
{\textbf{Generate R candidate initial partitions} $\mathcal{S}^{(r)}$ with values $LB^+(\mathcal{S}^{(r)})$, $r\in[R]$}\;
{\textbf{Select the best candidate} $\mathcal{S} = \mathcal{S}^{(r^*)}$ where $r^* = \arg\max_r LB^+(\mathcal{S}^{(r)})$}\;
{\textbf{Set} $LB^+ = LB^+(\mathcal{S}^{(r^*)})$, $\gamma^+ = \frac{UB - LB^+}{UB}$ and set $\mathcal{P}_t = \{C_1^t, \ldots, C_K^t\}, \forall\, t \in [T]$}\;

\bigskip
\textbf{Apply a swap procedure}\;
\Repeat{$\gamma^+ \leq \epsilon_{\gamma}$, a time limit is reached, or no improvement is observed}{
\ForEach{cluster $j \in [K]$}{
    Sort anticlusters $t \in [T]$ by $LB^+_{jt}$ in a non-decreasing order, forming list $L_a$\;
    Sort anticlusters $t \in [T]$ by $LB^+_{jt}$ in a non-increasing order, forming list $L_d$\;

    \ForEach{anticluster $t$ in $L_a$}{
        Sort data points $p_i \in C^{t}_j$ in non-decreasing order of distance from $\mu_{j}$, forming list $D_a$\;
        \ForEach{data point $p_i$ in $D_a$}{
            \ForEach{anticluster $t'$ in $L_d \, | \, t \neq t'$}{
                Sort data points $p_{i'} \in C^{t'}_j$ in non-increasing order of distance from $\mu_{j}$, forming list $D_d$\;
                \ForEach{data point $p_{i'}$ in $D_d$}{
                    Construct a candidate partition $\mathcal{S}'$ by swapping $p_i$ and $p_{i'}$ as follows:
                    \begin{itemize}
                        \item $S'_m = S_m$ for $m \in [T]\setminus\{t,t'\}$
                        \item $S'_t = S_t \setminus \{p_{i}\} \cup \{p_{i'}\}$
                        \item $S'_{t'} = S_{t'} \setminus \{p_{i'}\} \cup \{p_{i}\}$
                    \end{itemize}
                    Evaluate $\mathcal{S'}$\;
                    Obtain for every $t \in [T]$ $\mathcal{P}'_t$ and value $LB^+(S'_t, K)$\;
                    \If{$\sum_{t=1}^T LB^+(S'_t, K)  \geq LB^+ $}{
                    Set $LB^+ = \sum_{t=1}^T LB^+(S'_t, K)$\;
                    Set $\mathcal{S} = \mathcal{S}'$ and update $\mathcal{P}_t$ for each $t \in [T]$ accordingly\;
                    Update gap $\gamma^+ = \frac{UB - LB^+}{UB}$\;
                    \textbf{break} to process the next cluster\;
                    }
                }
            }
        }
    }
}
}

\bigskip
\textbf{Produce a lower bound}\;
Compute  MSSC$_\textrm{LB}(S_t, K)$ for each $t \in [T]$\;
Set $\gamma_\textrm{\tiny LB} = \frac{UB - \sum_{t=1}^T \textrm{MSSC}_\textrm{LB}(S_t, K)}{UB}$\;
\bigskip

\textbf{Return} $\gamma_\textrm{\tiny LB}$\;
\medskip
\newpage
\end{algorithm}

\subsection{Produce a lower bound}
Given a clustering problem with a solution value $UB$, we aim to validate the $UB$ by leveraging the final anticlustering partition $\mathcal{S} = \{S_1, \ldots, S_T\}$ to compute a lower bound.
This is achieved by solving the clustering problem \eqref{eq:MSSC} for each anticluster set $S_t$, with $t \in [T]$, and obtaining either the optimal solution value $\textrm{MSSC}(S_t, K)$ or a lower bound on it, i.e., $\textrm{MSSC}_\textrm{LB}(S_t, K)$.
We define the gap as
{
\begin{equation}\label{eq:gammalb}
    \gamma_\textrm{\tiny LB} = \frac{UB - \sum_{t=1}^T \textrm{MSSC}_\textrm{LB}(S_t, K)}{UB} \, ,
\end{equation}
which provides a valid optimality certificate for the original problem.}

\section{Computational Results}  \label{sec:results}

This section provides the implementation details and presents the numerical results of the \textsc{AVOC} algorithm applied to synthetic and real-world datasets.

\subsection{Implementation Details}

The experiments are performed on a macOS system equipped with an Apple M4 Max chip (14-core) and 36 GB of RAM, running macOS version 15.5. The \textsc{AVOC} algorithm is implemented in C++.
To compute the initial clustering partition and perform the evaluation procedure of the \textsc{AVOC} algorithm (see Section \ref{sec:evaluate}), { we use the HG-means algorithm by \cite{gribel2019hg}, which is among the best-performing heuristics for MSSC.}
We use Gurobi v12.0 with all default settings to solve the MILP \eqref{eq:mount} {and a time limit of 60 seconds}.
In the initialization phase, we generate $R=15$ candidate partitions.
For the swap procedure, we set a minimum gap threshold $\epsilon_{\gamma}$ of 0.001\% and a maximum execution time set to $4 \cdot T$ minutes, where $T$ is the number of anticlusters chosen.

The lower bound computation leverages parallel processing in a multi-threaded environment using a pool of threads. Each clustering problem on an anticlustering set is processed in a separate thread. 
To compute the bound, we use \textsc{SOS}-\textsc{SDP}\footnote{\url{https://github.com/antoniosudoso/sos-sdp}}, the state-of-the-art exact solver for MSSC. 
For our experiments, the SDPs were solved only at the root node of the search tree, with a maximum of 80 cutting-plane iterations. An instance is solved successfully if the optimality gap is less than or equal to $10^{-4}$. This gap, defined as $(UB - LB) / UB$, measures the relative difference between the best upper bound (UB) and lower bound (LB). All other parameters are kept at their default values, as detailed in \cite{piccialli2022sos}.
The source code of \textsc{AVOC} {and the instances tested are} publicly available at \url{https://github.com/AnnaLivia/AVOC}.

\subsection{Datasets}

\paragraph{Synthetic instances}
To illustrate the effectiveness of our algorithm on synthetic instances, we generate large-scale Gaussian datasets comprising $N=10,000$ data points in a two-dimensional space ($D=2$). These datasets vary in the number of clusters ($K \in \{2, 3, 4\}$) and noise levels. Specifically, the data points are sampled from a mixture of $K$ Gaussian distributions $\mathcal{N}(\mu_j, \Sigma_j)$  for $j \in \{1, \dots, K\}$, with equal mixing proportions. Each distribution is characterized by a mean $\mu_j$ and a shared spherical covariance matrix $\Sigma_j = \sigma I$, where the standard deviation $\sigma$ takes values in $\{0.50, 0.75, 1.00\}$ to represent different noise levels. The cluster centers $\mu_j$ are drawn uniformly from the interval $[-1.00, 10]$. Instances are labelled using the format $N$-$K$-$\sigma$. Figures \ref{fig:art1}-\ref{fig:art3} show  the datasets generated with $\sigma \in\{0.50,0.75,1.00\}$ for $K=3$. We can see that the clusters are well separated for $\sigma=0.5$, and become more confused when $\sigma$ increases.
\begin{figure}[ht]
    \centering
    \begin{subfigure}{0.3\textwidth}
        \includegraphics[width=\textwidth]{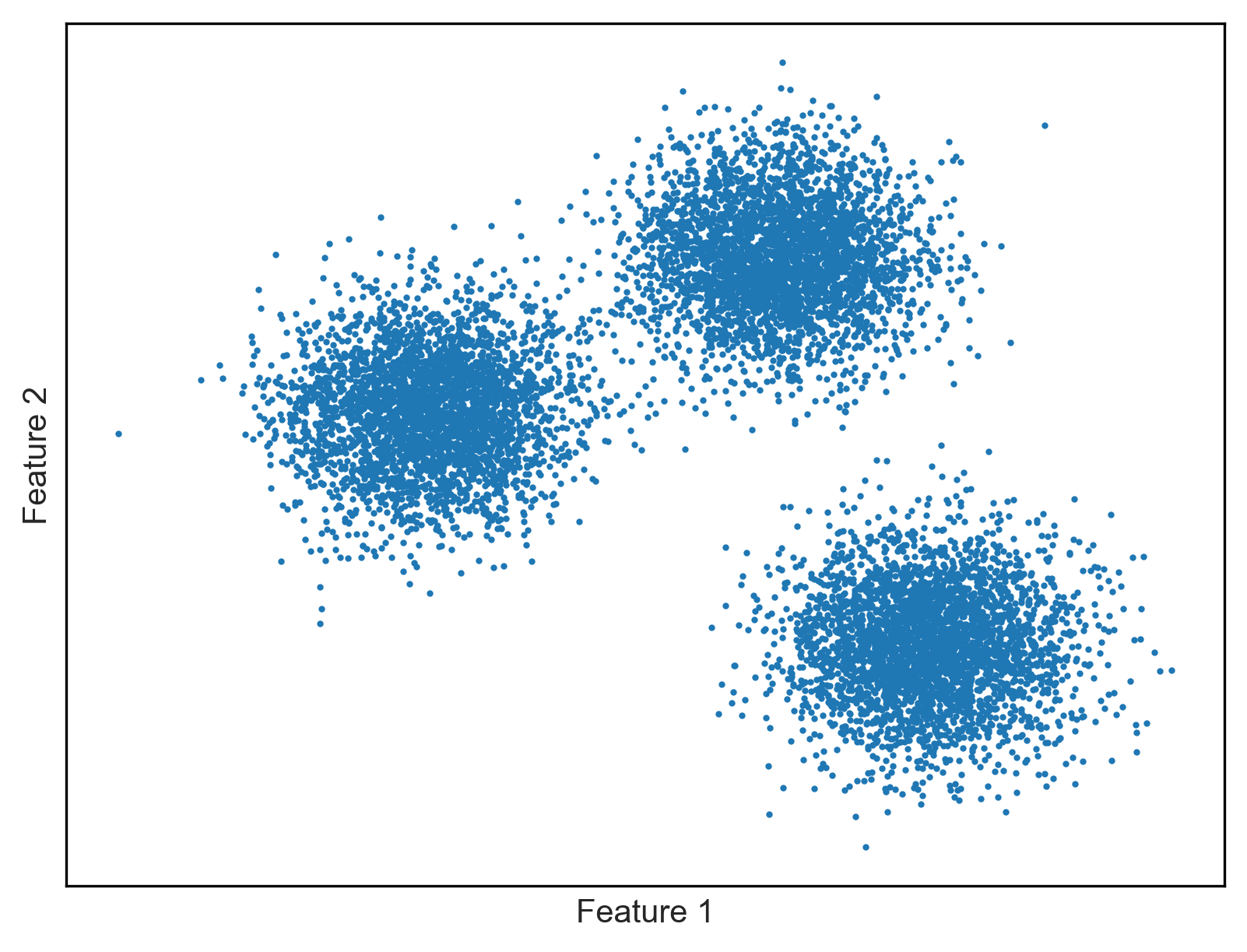}
        \caption{Dataset $10,000$-$3$-$0.50$}
        \label{fig:art1}
    \end{subfigure}
    \begin{subfigure}{0.3\textwidth}
        \includegraphics[width=\textwidth]{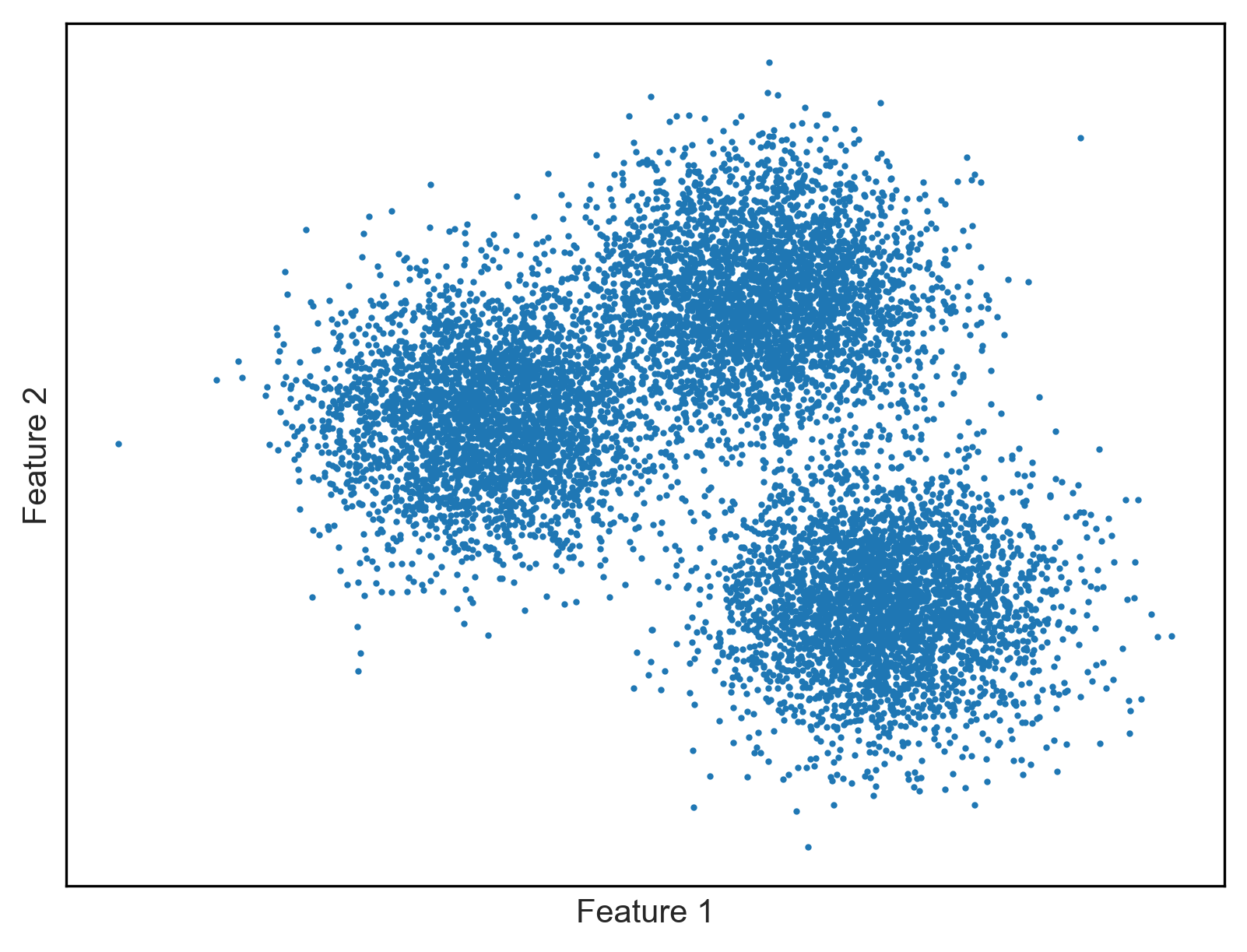}
        \caption{Dataset $10,000$-$3$-$0.75$}
        \label{fig:art2}
    \end{subfigure}
    \begin{subfigure}{0.3\textwidth}
        \centering
        \includegraphics[width=\textwidth]{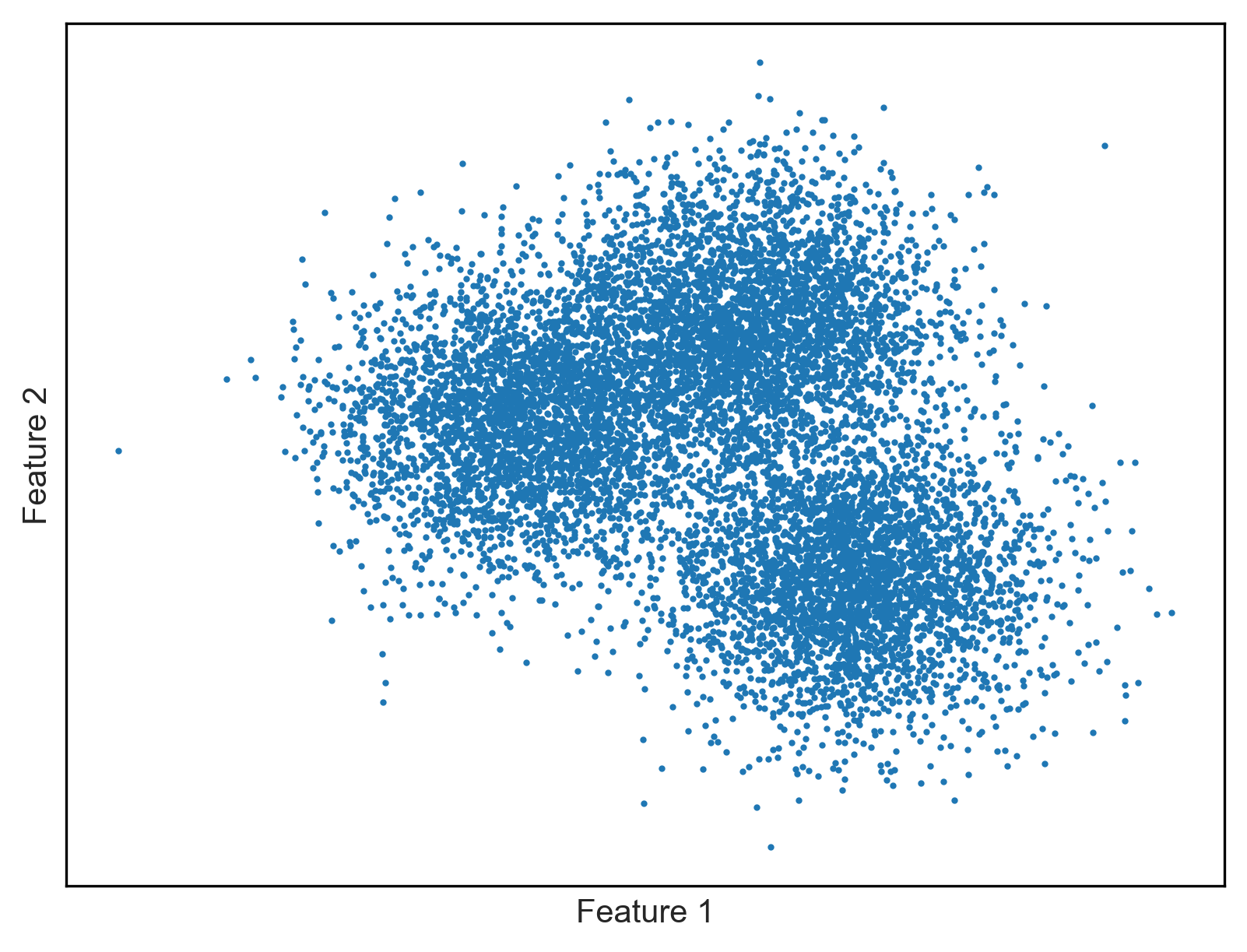}
        \caption{Dataset $10,000$-$3$-$1.00$}
        \label{fig:art3}
    \end{subfigure}
    \caption{Visualization of the synthetic datasets generated with number of data points $N=10,000$, number of clusters $K = 3$, and noise level $\sigma \in \{0.50, 0.75, 1.00\}$.}
    \label{fig:art}
\end{figure}

\paragraph{Real-world instances}
We consider { eight} real-world datasets { with varying sample size and dimensionality}. All of them can be downloaded from UCI\footnote{\url{https://archive.ics.uci.edu/}} and {UCR\footnote{\url{https://www.cs.ucr.edu/~eamonn/time\_series\_data\_2018/}} repositories}. The number of clusters is chosen with the elbow method. Table \ref{tab:1} reports { the value of the clustering partition $UB:=\textrm{MSSC}_\textrm{UB}(O,K)$ and} the datasets characteristics, i.e., the number of data points $N$, features $D$, clusters $K$ and the number of points in each cluster in the initial clustering partition $\mathcal{P}$. The smallest instance (\textit{Abalone}) is the largest instance solved to optimality by \textsc{SOS}-\textsc{SDP} in \cite{piccialli2022sos}.

\begin{table}[htbp]
\centering
\small
\begin{tabular}{l|rrrr|rrrr}
\textbf{{Dataset}} & {$UB$} & $N$ & $D$ & $K$ & \multicolumn{4}{c}{$|C_1| \, \dots \, |C_K|$}\\
\bottomrule
\textit{Abalone} & 1,005 & 4,177 & 10 & 3 & 1,308 & 1,341 & 1,528 & \\
\textit{Electric} & 101,769 & 10,000 & 12 & 3 & 2,886 & 3,537 & 3,577 & \\
\textit{Facebook} & 62,469 & 7,050 & 13 & 3 & 218 & 2,558 & 4,274 & \\
\textit{Frogs} & 84,437 & 7,195 & 22 & 4 & 605 & 670 & 2,367 & 3,553\\
\textit{Pulsar} & 92,209 & 17,898 & 8 & 2 & 2,057 & 15,841 & &\\
{\textit{StarLightCurves}} & { 3,476,012} &{ 9,236} & {1,024} & {3} & {2,552} & {2,576} & {4,108} & \\
{\textit{TwoPatterns}} & { 548,616} & {5,000} & {128} & {4} & {1,118} & {1,159} & {1,296} & {1,427}\\
{\textit{Wafer}} & { 439,777} & {7,164} & {152} & {2} & {2,401} & {4,763} & & \\
\bottomrule
\end{tabular}
\caption{Characteristics of real-world datasets.}
\label{tab:1}
\end{table}

\subsection{Results}

Each dataset has been tested with different values of the number of anticlusters $T$. For every dataset, we considered five values of $T$, selected according to the number of data points $N$ and the size of the clusters in the initial solution.
The choice of $T$ is guided by two main requirements: (i) each anticluster must remain tractable, i.e., contain fewer than 1,000 data points; and (ii) each cluster must be sufficiently represented within every anticluster, ensuring that no cluster is underrepresented.
For synthetic datasets, where clusters are balanced, we consistently used $T=\{10,12,15,17,20\}$.
For real-world datasets, the choice of $T$ was tailored to both the size of the instance and the distribution of cluster cardinalities. In the presence of unbalanced clusters, special care is required to ensure adequate representation. For example, in the \textit{Facebook} dataset, the smallest cluster contains 218 elements (see Table~\ref{tab:1}). Selecting $T > 16$ would result in fewer than 13 points from this cluster in each anticluster, potentially leading to poor representation.

For comparison, we consider two natural baselines for the anticlustering partition: \textit{(i)} a random equal-size partition of the dataset $O$, and \textit{(ii)} the best candidate solution obtained from the multi-start initialization phase, which serves as the starting point of \textsc{AVOC}. 
{
For each of these partitions, the corresponding optimality gap $\gamma_{\textrm{\tiny LB}}$ is computed according to \eqref{eq:gammalb}. These values are used as baselines to assess the improvement achieved by our algorithm, and are denoted by $\gamma_{\textrm{\tiny LB}}^{\textrm{rnd}}$ and $\gamma_{\textrm{\tiny LB}}^{\textrm{start}}$.
}
It is worth noting that the MILP model \eqref{eq:mount} is not used in constructing baseline \textit{(i)}, as it is an integral component of the \textsc{AVOC} procedure itself.

The results of the experiments on synthetic and real-world instances are presented in Tables \ref{tab:sint} and \ref{tab:real}, respectively.
For each experiment, the tables report: the number of partitions ($T$); 
the solutions gaps of the benchmark anticlustering partitions, namely the random baseline ($\gamma_\textrm{\tiny LB}^{\textrm{rnd}}$) and the initial solution ($\gamma_\textrm{\tiny LB}^{\textrm{start}}$); and
the lower solution gaps {in percentage} ($\gamma_\textrm{\tiny LB}${\tiny(\%)}). Computational times are also included, specifically the time in seconds required to initialize the algorithm ($Init$ {\tiny(s)}); the time spent in the swap procedure in seconds ($Heur$ {\tiny(s)}); the time spent by \textsc{SOS-SDP} to solve the root node of the clustering problems in seconds ($\textsc{SOS}$ {\tiny (s)}); and the total time for the \textsc{AVOC} algorithm in minutes ($Time${\tiny(min)}).
Furthermore, for real-world instances we report also the quality gap ($\gamma^+${\tiny(\%)}) in percentage, { computed according to \eqref{eq:gamma+}}, and the stopping criterion ($Stop$), which could be (a) achieving the minimum gap ($\epsilon_{\gamma}$), (b) reaching the time limit ({\scriptsize T/O}), or (c) observing no further improvement (\textsc{h}).
For the synthetic instances, we omit these columns as the swap procedure always terminates due to reaching the minimum gap criterion (\(\gamma^+ = \epsilon_{\gamma}\)).

To fully appreciate the results, it is important to note that solving an instance of around 1,000 data points to global optimality requires several hours of computational time. According to the computational results in \cite{piccialli2022sos}, solving a synthetic instance of size 3,000 with noise comparable to $\sigma=1$ takes around 10 hours, while solving a real dataset of approximately 4,000 datapoints requires more than 24 hours.  
Although the machine used in \cite{piccialli2022sos} and the one used for the experiments in this paper differ, the CPU times reported in \cite{piccialli2022sos} provide a useful reference for understanding the computational challenge of the instances considered here.

\paragraph{Synthetic instances}
\begin{table}[!ht]
    \centering
    \scriptsize
    \begin{tabular}{l|r|rrr|rrrr}
        \toprule
        Instance & $T$ & $\gamma_\textrm{\tiny LB}^{\textrm{rnd}}$ \tiny{(\%)}  & $\gamma_\textrm{\tiny LB}^{\textrm{start}}$ \tiny{(\%)} & $\gamma_\textrm{\tiny LB}${\tiny(\%)} & Init \tiny{(s)} & Heur \tiny{(s)} & \textsc{SOS} \tiny{(s)} & Time \tiny{(min)}\\
        \midrule
        \multirow{5}[2]{*}{10000\_2\_05} & 10    & 0.170& 0.131& 0.004& 2     & 17    & 67    & 1 \\
              & 12    & 0.245& 0.103& 0.002& 5     & 11    & 62    & 1 \\
              & 15    & 0.297& 0.182& 0.002& 16    & 14    & 56    & 1 \\
              & 17    & 0.270& 0.220& 0.002& 34    & 30    & 60    & 2 \\
              & 20    & 0.343& 0.302& \textbf{0.001} & 64    & 37    & 59    & 3 \\
        \midrule
        \multirow{5}[2]{*}{10000\_2\_075} & 10    & 0.244& 0.153& 0.026& 2     & 18    & 448   & 8 \\
              & 12    & 0.282& 0.170& 0.028& 5     & 14    & 323   & 6 \\
              & 15    & 0.329& 0.258& 0.023& 16    & 27    & 250   & 5 \\
              & 17    & 0.419& 0.251& 0.027& 31    & 18    & 217   & 4 \\
              & 20    & 0.402& 0.312& \textbf{0.022} & 62    & 25    & 199   & 5 \\
        \midrule
        \multirow{5}[2]{*}{10000\_2\_10} & 10    & 0.417& 0.301& 0.223& 2     & 17    & 535   & 9 \\
              & 12    & 0.459& 0.399& 0.172& 5     & 35    & 578   & 10 \\
              & 15    & 0.471& 0.403& 0.149& 16    & 29    & 360   & 7 \\
              & 17    & 0.501& 0.369& 0.101& 32    & 27    & 310   & 6 \\
              & 20    & 0.514& 0.345& \textbf{0.084} & 62    & 32    & 286   & 6 \\
        \midrule
        \multirow{5}[2]{*}{10000\_3\_05} & 10    & 0.278& 0.196& 0.005& 9     & 23    & 246   & 5 \\
              & 12    & 0.333& 0.242& 0.008& 60    & 22    & 251   & 6 \\
              & 15    & 0.393& 0.298& \textbf{0.003} & 159   & 64    & 133   & 6 \\
              & 17    & 0.446& 0.421& 0.010& 345   & 42    & 128   & 9 \\
              & 20    & 0.521& 0.478& 0.008& 559   & 60    & 133   & 13 \\
        \midrule
        \multirow{5}[2]{*}{10000\_3\_075} & 10    & 0.342& 0.258& 0.097& 18    & 29    & 559   & 10 \\
              & 12    & 0.451& 0.330& 0.053& 61    & 33    & 365   & 8 \\
              & 15    & 0.483& 0.413& 0.058& 159   & 45    & 298   & 8 \\
              & 17    & 0.579& 0.437& 0.046& 354   & 67    & 309   & 12 \\
              & 20    & 0.726& 0.504& \textbf{0.035} & 557   & 57    & 246   & 14 \\
        \midrule
        \multirow{5}[2]{*}{10000\_3\_10} & 10    & 1.427& 1.241& 1.026& 18    & 55    & 719   & 13 \\
              & 12    & 1.383& 1.163& 0.840& 58    & 47    & 922   & 17 \\
              & 15    & 1.238& 1.093& 0.812& 168   & 61    & 505   & 12 \\
              & 17    & 1.232& 1.078& 0.706& 361   & 59    & 461   & 15 \\
              & 20    & 1.259& 1.004& \textbf{0.490} & 559   & 68    & 448   & 18 \\
        \midrule
        \multirow{5}[2]{*}{10000\_4\_05} & 10    & 0.350& 0.342& \textbf{0.002} & 103   & 37    & 276   & 7 \\
              & 12    & 0.480& 0.343& 0.005& 281   & 42    & 182   & 8 \\
              & 15    & 0.499& 0.475& 0.004& 601   & 72    & 154   & 14 \\
              & 17    & 0.612& 0.560& 0.002& 601   & 69    & 131   & 13 \\
              & 20    & 0.703& 0.677& 0.005& 601   & 83    & 148   & 14 \\
        \midrule
        \multirow{5}[2]{*}{10000\_4\_075} & 10    & 0.499& 0.314& 0.048& 104   & 61    & 500   & 11 \\
              & 12    & 0.541& 0.481& 0.053& 285   & 58    & 373   & 12 \\
              & 15    & 0.596& 0.510& 0.044& 601   & 79    & 267   & 16 \\
              & 17    & 0.692& 0.623& \textbf{0.043} & 601   & 85    & 263   & 16 \\
              & 20    & 0.975& 0.774& 0.044& 601   & 94    & 247   & 16 \\
        \midrule
        \multirow{5}[2]{*}{10000\_4\_10} & 10    & 1.230& 1.162& 0.861& 103   & 54    & 770   & 15 \\
              & 12    & 1.219& 1.041& 0.605& 285   & 57    & 568   & 15 \\
              & 15    & 1.033& 1.032& 0.530& 601   & 92    & 471   & 19 \\
              & 17    & 1.069& 0.897& 0.416& 601   & 83    & 411   & 18 \\
              & 20    & 1.180& 0.929& \textbf{0.301} & 601   & 86    & 365   & 18 \\
        \bottomrule
    \end{tabular}
    \caption{Summary of experimental results for the synthetic instances, including the number of partitions ($T$), {solutions gaps of the random baseline ($\gamma_\textrm{\tiny LB}^\text{rnd}$) and the initial solution ($\gamma_\textrm{\tiny LB}^\text{start}$)}, lower solution gap ($\gamma_\textrm{\tiny LB}$), computational times ({initialization} time, heuristic time, and \textsc{SOS}-\textsc{SDP} time), and total algorithm runtime in minutes (\textit{Time}).}
    \label{tab:sint}
\end{table}
Table \ref{tab:sint} shows that the \textsc{AVOC} algorithm provides strong and valid lower bounds on synthetic instances.
On average, it achieves an average gap of \(0.18\%\) within 10 minutes, with the swap procedure requiring only 10–90 seconds to reach the target accuracy $\gamma^+ = \epsilon_\gamma = 0.001 \%$. 
The random partition baseline (column $\gamma_\textrm{\tiny LB}^\text{rnd}$) already delivers reasonably good bounds with an average gap of \(0.62\%\), while the best multi-start initialization (column $\gamma_\textrm{\tiny LB}^\text{best}$) further improves them to \(0.51\%\).
In all cases, \textsc{AVOC} consistently improves both baselines, yielding to a high-quality approximation \(LB^+\).

Datasets with well-separated clusters (e.g., $\sigma=0.5$) exhibit negligible deviations between anticluster centroids and original centroids.
Therefore, the centroids remain nearly identical across various anticlusters. Under these conditions, the resulting lower bound is tight, yielding optimality gaps {below $0.003\%$ in all cases.}
On the other hand, as $\sigma$ increases, the separation between clusters decreases (see Figure \ref{fig:art}).
{
Consequently, for instances with $\sigma > 0.5$, Assumption~1 may no longer hold, making the gap behaviour less predictable as $T$ varies. This effect is reflected in the results: as the noise level increases, the optimality gaps increase as well.
For larger noise levels, the gaps remain small but become more pronounced, reaching at most $1.026\%$ in the most challenging instance.
However, each value of $T$ yields a valid lower bound and, therefore, a valid optimality certificate. Considering, for each instance, the best gap obtained among the tested values of $T$, the optimality gap remains below $0.5\%$ for all synthetic datasets.
}
{
These empirical observations are consistent with the theoretical analysis in Section~\ref{sec:theoretical}.
When the noise level is low ($\sigma=0.5$), clusters are well separated, the restrictions are more likely to preserve the original clustering structure, and the induced centroids remain close to the original ones. Consequently, the resulting optimality gaps are extremely small. As the noise level increases, the cluster structure in each anticluster may deviate from the clustering solution one, increasing the distance between the anticluster centroids and the original centroids. This leads to larger gaps, as stated in Proposition~\ref{prop:tight}.
}
Finally, higher noise levels make the clustering subproblems more challenging, leading to a moderate increase in the computational time required by \textsc{SOS}-\textsc{SDP}.

\paragraph{Real-world instances}
\begin{table}[!ht]
    \centering
    \scriptsize
    \begin{tabular}{l|r|rrrr|rrrrr}
        \toprule
        Instance & $T$ & $\gamma_\textrm{\tiny LB}^\text{rnd}${\tiny(\%)} & $\gamma_\textrm{\tiny LB}^\text{start}${\tiny(\%)} & $\gamma^+$\tiny{(\%)} & $\gamma_\textrm{\tiny LB}$\tiny{(\%)} & Init \tiny{(s)} & Heur \tiny{(s)} & Stop & \textsc{SOS} \tiny{(s)} & Time \tiny{(min)}\\
        \midrule
        \multirow{5}[2]{*}{Abalone} & 4     & 0.359& 0.085& 0.001& \textbf{0.003} & 0     & 64    & $\epsilon_\gamma$   & 270   & 6 \\
              & 5     & 0.261& 0.136& 0.001& 0.004& 0     & 92    & $\epsilon_\gamma$   & 169   & 4 \\
              & 6     & 0.349& 0.244& 0.001& 0.004& 1     & 84    & $\epsilon_\gamma$   & 75    & 3 \\
              & 8     & 0.565& 0.238& 0.001& 0.005& 4     & 409   & \textsc{h} & 64    & 8 \\
              & 10    & 0.600& 0.419& 0.002& 0.007& 13    & 819   & \textsc{h} & 152   & 16 \\
        \midrule
        \multirow{5}[2]{*}{Electric} & 10    & 3.171& 3.056& 0.001& 3.018& 15    & 662   & $\epsilon_\gamma$   & 903   & 26 \\
              & 15    & 2.972& 2.935& 0.001& \textbf{2.091} & 202   & 1,122  & $\epsilon_\gamma$   & 1,267  & 43 \\
              & 20    & 2.832& 2.830& 0.001& 2.183& 688   & 3,983  & \textsc{h} & 794   & 91 \\
              & 25    & 3.235& 3.152& 0.002& 2.485& 887   & 4,312  & \textsc{h} & 587   & 96 \\
              & 30    & 3.538& 3.335& 0.003& 2.737& 903   & 3,800  & \textsc{h} & 511   & 87 \\
        \midrule
        \multirow{5}[2]{*}{Facebook} & 7     & 4.272& 2.803& 0.012& \textbf{2.354} & 3     & 1,681  & {\tiny T/O} & 1,075  & 46 \\
        & 8     & 5.751& 3.479& 0.012& 2.731& 5     & 1,952  & {\tiny T/O} & 993   & 49 \\
        & 10    & 7.107& 4.947& 0.028& 3.711& 13    & 2,437  & {\tiny T/O} & 952   & 57 \\
        & 13    & 8.046& 5.747& 0.098& 5.314& 148   & 3,156  & {\tiny T/O} & 807   & 69 \\
        & 16    & 10.040& 9.146& 0.203& 6.175& 890   & 3,841  & {\tiny T/O} & 706   & 91 \\
        \midrule
        \multirow{5}[2]{*}{Frogs} & 8     & 3.443& 2.800& 0.004& 2.615& 14    & 1,931  & {\tiny T/O} & 1,746  & 62 \\
          & 10    & 3.285& 3.069& 0.007& 2.577& 61    & 2,493  & {\tiny T/O} & 1,290  & 64 \\
          & 13    & 3.245& 2.609& 0.013& 2.069& 558   & 3,201  & {\tiny T/O} & 985   & 79 \\
          & 15    & 3.206& 2.827& 0.016& 1.994& 894   & 3,642  & {\tiny T/O} & 857   & 90 \\
          & 16    & 2.957& 2.747& 0.023& \textbf{1.722} & 901   & 3,929  & {\tiny T/O} & 790   & 94 \\
        \midrule
        \multirow{5}[2]{*}{Pulsar} & 18    & 3.260& 3.118& 0.002& 2.675& 56    & 4,393  & {\tiny T/O} & 4,432  & 148 \\
              & 20    & 3.118& 3.012& 0.001& 2.690& 149   & 4,849  & {\tiny T/O} & 3,850  & 147 \\
              & 25    & 3.312& 2.899& 0.001& 2.521& 553   & 6,067  & {\tiny T/O} & 2,491  & 152 \\
              & 30    & 2.933& 2.593& 0.002& 2.385& 830   & 7,350  & {\tiny T/O} & 1,960  & 169 \\
              & 35    & 2.926& 2.464& 0.003& \textbf{2.125} & 902   & 8,479  & {\tiny T/O} & 1,813  & 187 \\
        \midrule
    \multirow{5}[1]{*}{{StarLightCurves}} & {10} & {1.174} & {0.912} & {0.037} & {0.860} & {49} & {2,492} & {{\tiny T/O}} & {9,063} & {193} \\
          & {12} & {1.182} & {0.803} & {0.126} & {0.873} & {292} & {2,993} & {{\tiny T/O}} & {8,629} & {199} \\
          & {15} & {1.108} & {0.812} & {0.083} & {0.677} & {852} & {3,782} & {{\tiny T/O}} & {5,650} & {171} \\
          & {18} & {0.985} & {0.748} & {0.096} & {0.579} & {876} & {4,790} & {{\tiny T/O}} & {4,227} & {165} \\
          & {20} & {1.066} & {0.690} & {0.092} & {\textbf{0.445}} & {825} & {4,998} & {{\tiny T/O}} & {3,185} & {150} \\
        \midrule
        \multirow{5}[1]{*}{{TwoPatterns}} & {5} & {3.167} & {3.068} & {0.090} & {3.138} & {4} & {1,344} & {{\tiny T/O}} & {6,423} & {130} \\
              & {8} & {3.175} & {3.011} & {0.170} & {3.022} & {31} & {2,002} & {{\tiny T/O}} & {3,787} & {97} \\
              & {10} & {3.367} & {3.115} & {0.248} & {\textbf{2.863}} & {115} & {2,487} & {{\tiny T/O}} & {3,149} & {96} \\
              & {12} & {3.552} & {3.152} & {0.271} & {2.940} & {801} & {2,894} & {{\tiny T/O}} & {2,149} & {97} \\
              & {15} & {3.892} & {3.426} & {0.230} & {3.234} & {905} & {4,070} & {{\tiny T/O}} & {2,704} & {128} \\
        \midrule
        \multirow{5}[1]{*}{{Wafer}} & {8} & {0.171} & {0.159} & {0.041} & {0.046} & {1} & {1,940} & {{\tiny T/O}} & {1,486} & {57} \\
      & {10} & {0.266} & {0.194} & {0.048} & {0.050} & {3} & {2,488} & {{\tiny T/O}} & {967} & {58} \\
      & {12} & {0.311} & {0.271} & {0.055} & {0.058} & {6} & {2,898} & {{\tiny T/O}} & {1,623} & {75} \\
      & {15} & {0.374} & {0.282} & {0.037} & {\textbf{0.040}} & {23} & {3,665} & {{\tiny T/O}} & {1,464} & {86} \\
      & {20} & {0.593} & {0.458} & {0.054} & {0.056} & {431} & {4,921} & {{\tiny T/O}} & {1,442} & {113} \\
        \bottomrule
    \end{tabular}
    
    \caption{Summary of experimental results for the real-world instances, including the number of partitions ($T$), {solutions gaps of the random baseline ($\gamma_\textrm{\tiny LB}^\text{rnd}$) and the initial solution ($\gamma_\textrm{\tiny LB}^\text{start}$)}, lower solution gap ($\gamma_\textrm{\tiny LB}$, quality gap ($\gamma^+$), computational times ({initialization} time, heuristic time, and \textsc{SOS}-\textsc{SDP} time), and total algorithm runtime in minutes (\textit{Time}).}
    \label{tab:real}
\end{table}
Table \ref{tab:real} highlights distinct behaviors across real-world datasets.
In all cases, the algorithm produces a better gap than both the random baseline $\gamma_\textrm{\tiny LB}^\text{rnd}$ and the multi-start initialization gap $\gamma_\textrm{\tiny LB}^\text{start}$. A consistent hierarchy emerges: random partitions yield the weakest bounds, initialization provides a noticeable improvement, and the refinement stage further reduces the gap. The magnitude of this reduction, however, depends on the dataset.
We note that, for each dataset, all experiments are performed starting from the same initial clustering partition. Therefore, the smallest gap obtained across the tested values of $T$ can be regarded as the strongest optimality certificate for that instance.

For smaller datasets such as \textit{Abalone}, the method achieves very tight bounds ($\gamma_{\textrm{\tiny LB}} \approx \gamma^+$), with $\gamma^+$ always converging to the target threshold $\epsilon_\gamma = 0.001\%$. These instances are solved within minutes (at most 16), providing results comparable to exact methods at a fraction of the computational cost. For comparison, \cite{piccialli2022sos} reports an exact solution in 2.6 hours, whereas here the same dataset is validated within about 10 minutes, with gaps ranging between $0.003\%$ and $0.007\%$ depending on the number of anticlusters.

On larger datasets such as \textit{Electric} and \textit{Pulsar}, \textsc{AVOC} still provides strong certificates of quality: the final lower bound gaps $\gamma_\textrm{\tiny LB}$ remain around $2\%$, while $\gamma^+$ is close to the stopping threshold. These results are obtained within 1--3 hours. For the remaining datasets, the algorithm reaches the time limit before convergence, but still provides meaningful validation, with gaps below $3\%$.
Note that, the method appears robust with respect to the dimensionality of the data: high-dimensional datasets such as \textit{StarLightCurves}, \textit{TwoPatterns}, and \textit{Wafer}, whose number of features is two orders of magnitude larger than that of the other instances, exhibit comparable performance in terms of both gap quality and computational time.

From a computational perspective, increasing $T$ reduces the size of the subproblems, making the \textsc{SOS}-\textsc{SDP} phase faster, while increasing the cost of the swap-based heuristic, which is typically the dominant component. As a result, the total runtime does not exhibit a monotone behavior with respect to $T$, reflecting the trade-off between these two phases. Convergence to a local maximum of the anticlustering problem is observed only for \textit{Abalone} and \textit{Electric} at larger values of $T$.

\begin{figure}[!ht]
    \centering
    \begin{subfigure}{0.4\textwidth}
        \includegraphics[width=0.97\linewidth]{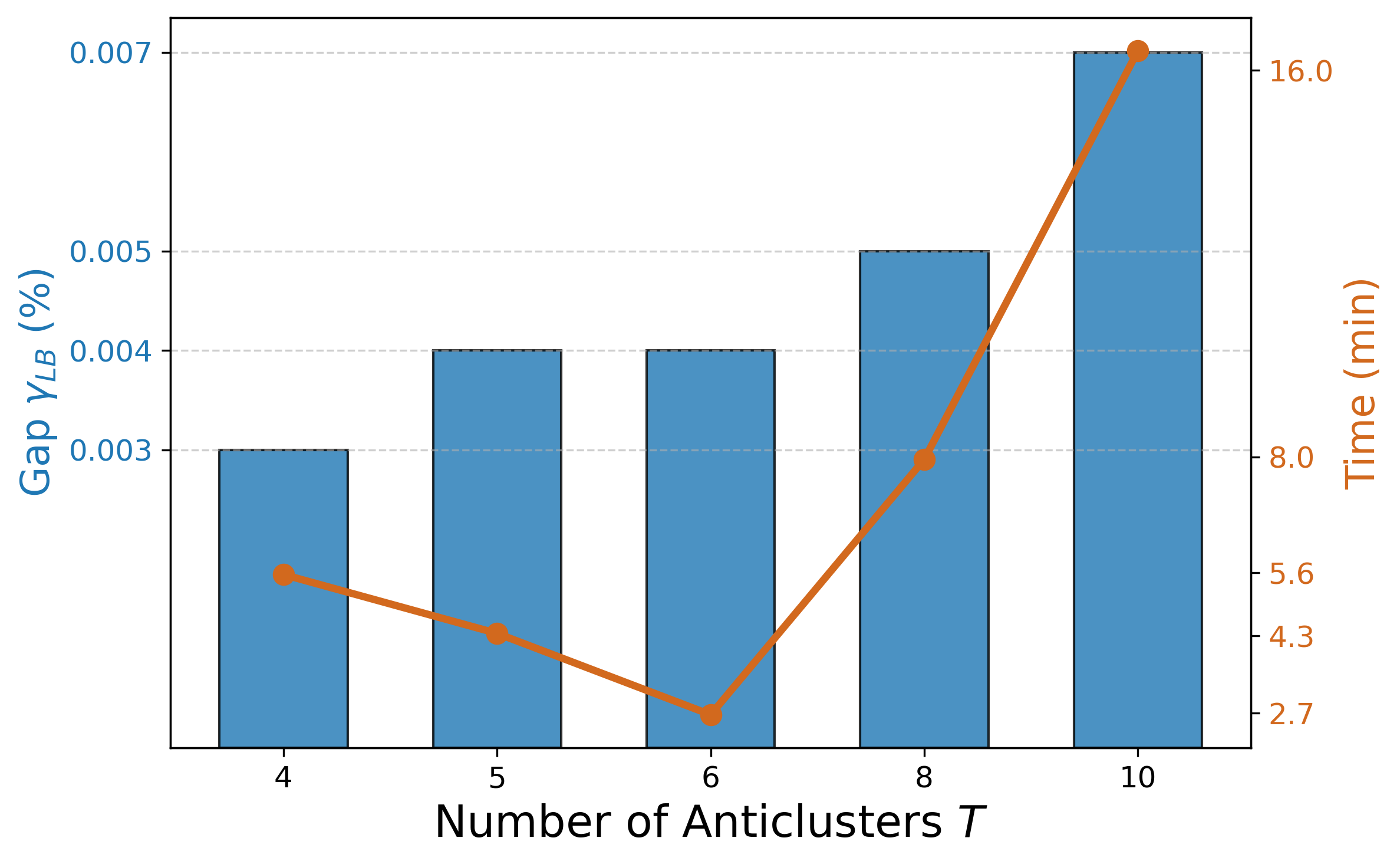}
        \caption{Abalone}
        \label{fig:abalone}
    \end{subfigure}
    \hspace{0.4cm}
    \begin{subfigure}{0.4\textwidth}
        \includegraphics[width=0.97\linewidth]{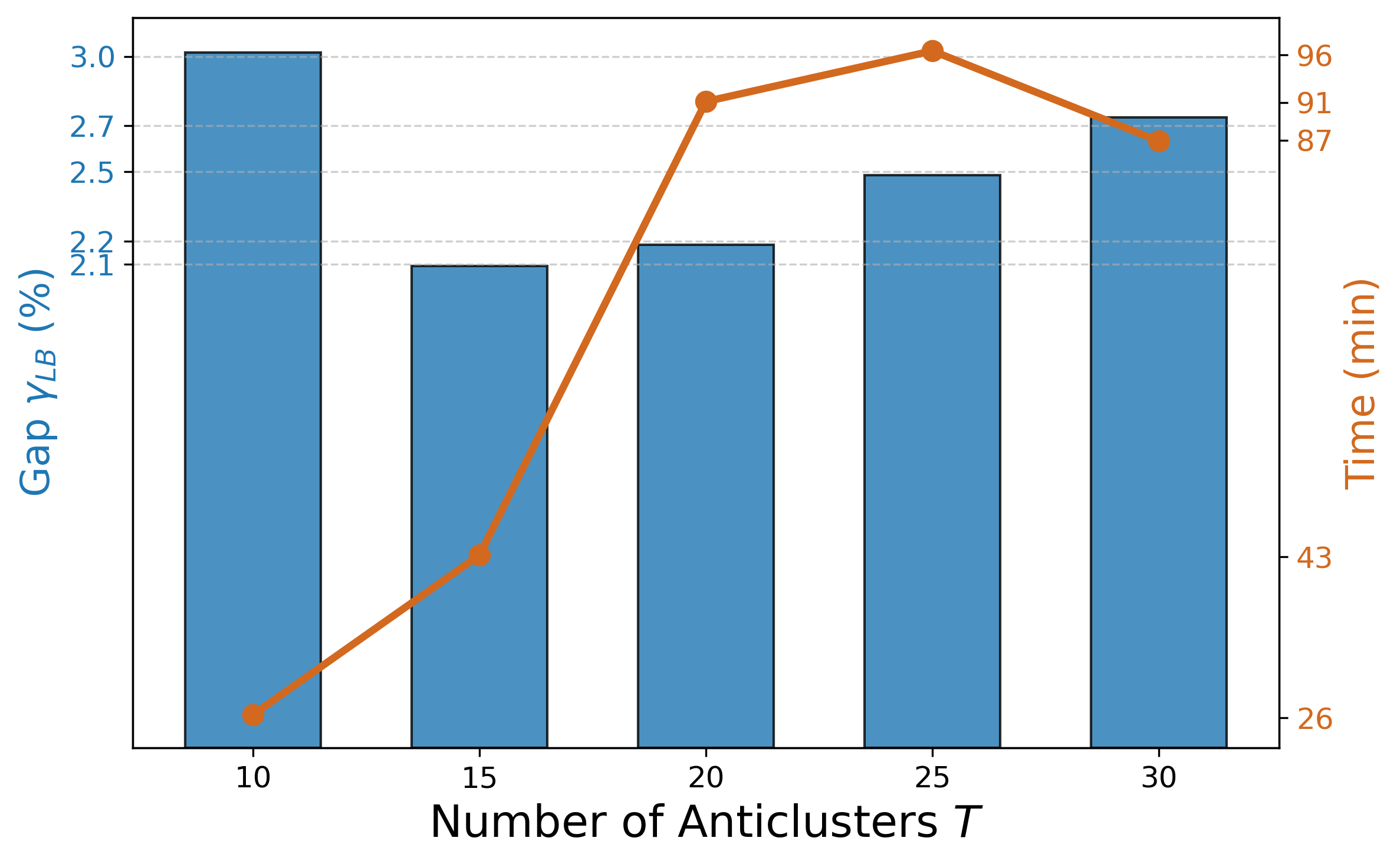}
        \caption{Electric}
        \label{fig:electric}
    \end{subfigure}\\[0.47cm]
    \begin{subfigure}{0.4\textwidth}
        \includegraphics[width=\linewidth]{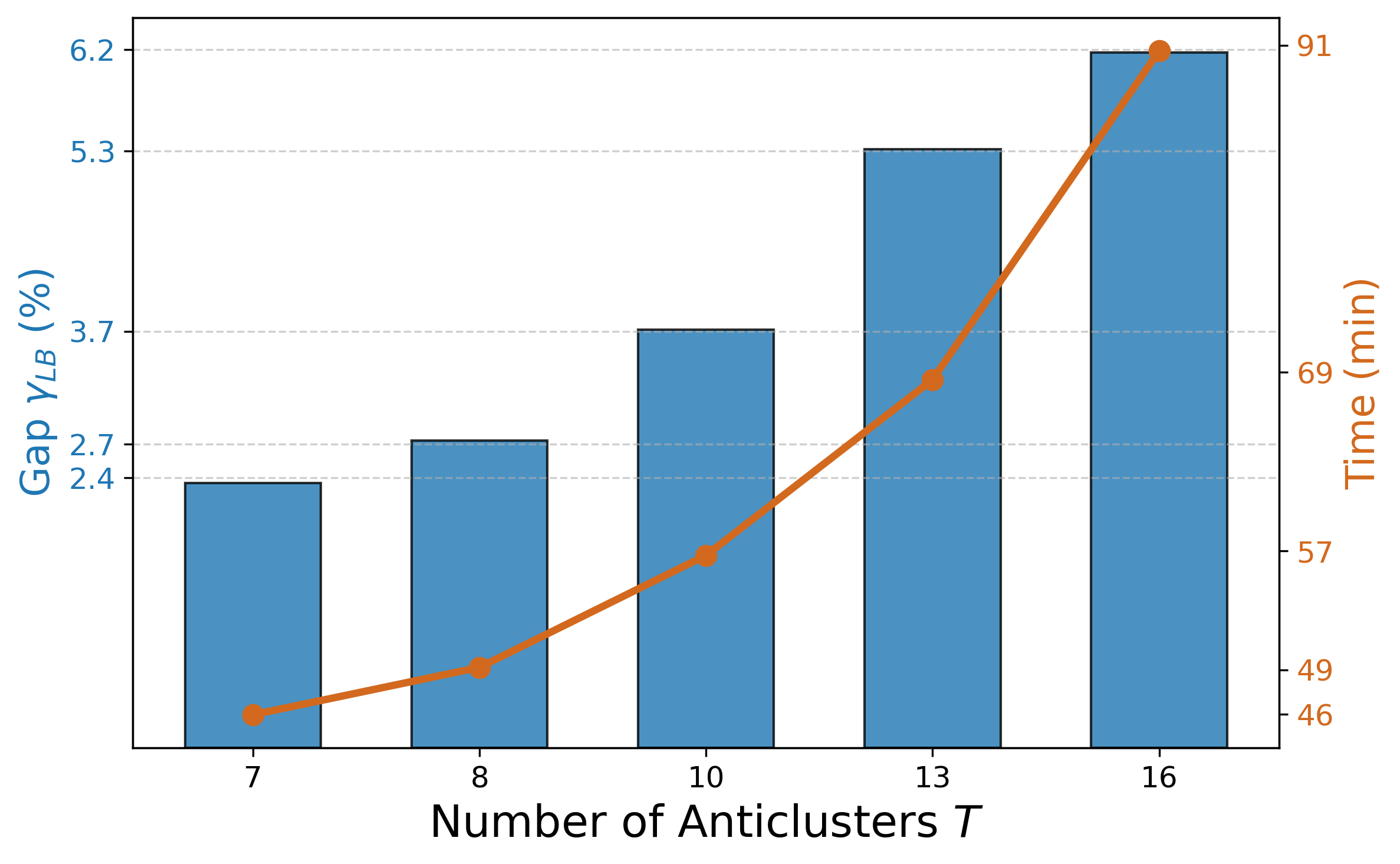}
        \caption{Facebook}
        \label{fig:facebook}
    \end{subfigure}
    \hspace{0.4cm}
    \begin{subfigure}{0.4\textwidth}
        \includegraphics[width=0.97\linewidth]{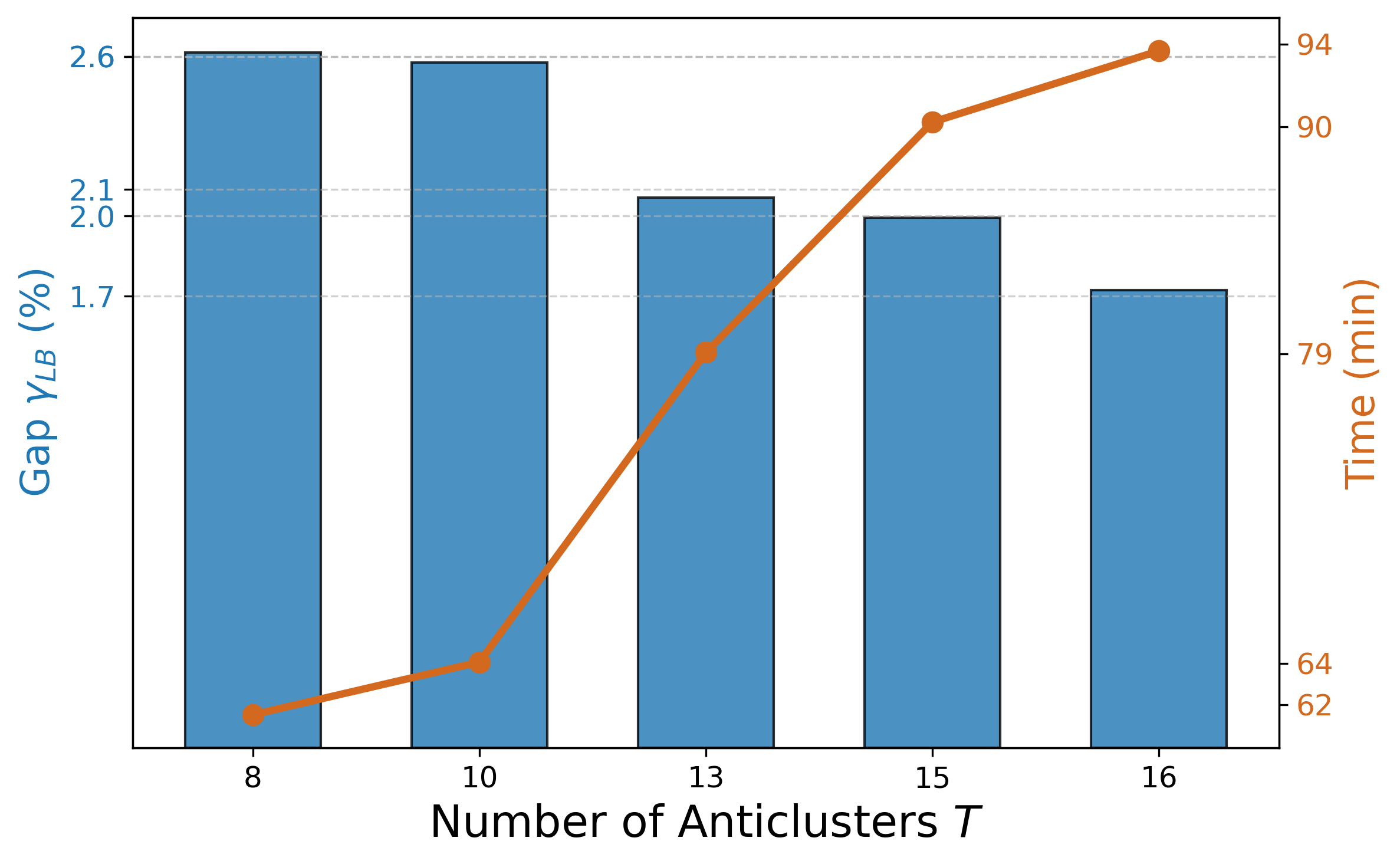}
        \caption{Frogs}
        \label{fig:frogs}
    \end{subfigure}\\[0.47cm]
    \begin{subfigure}{0.4\textwidth}
        \includegraphics[width=0.97\linewidth]{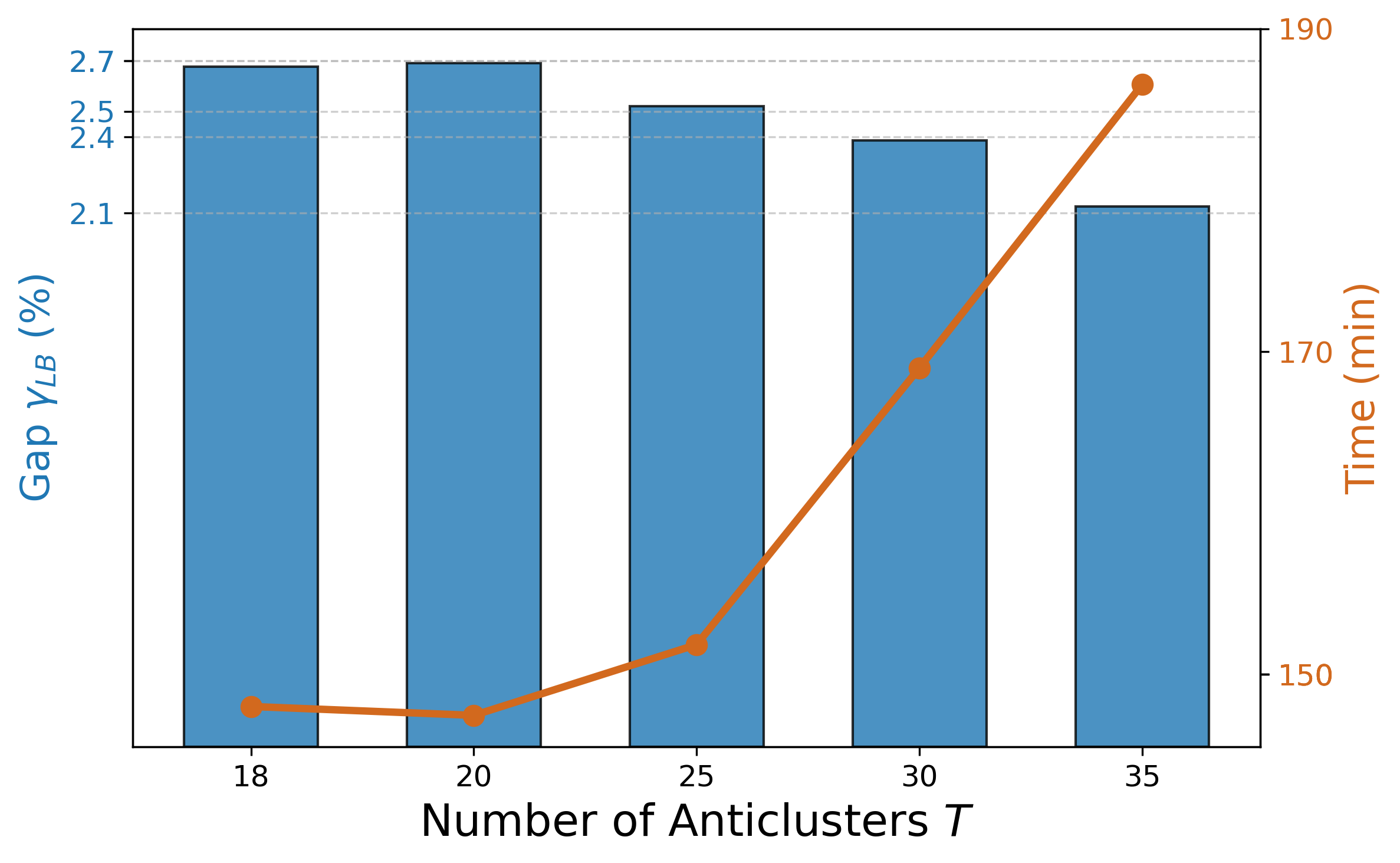}
        \caption{Pulsar}
        \label{fig:pulsar}
    \end{subfigure}
    \hspace{0.4cm}
    \begin{subfigure}{0.4\textwidth}
        \includegraphics[width=0.97\linewidth]{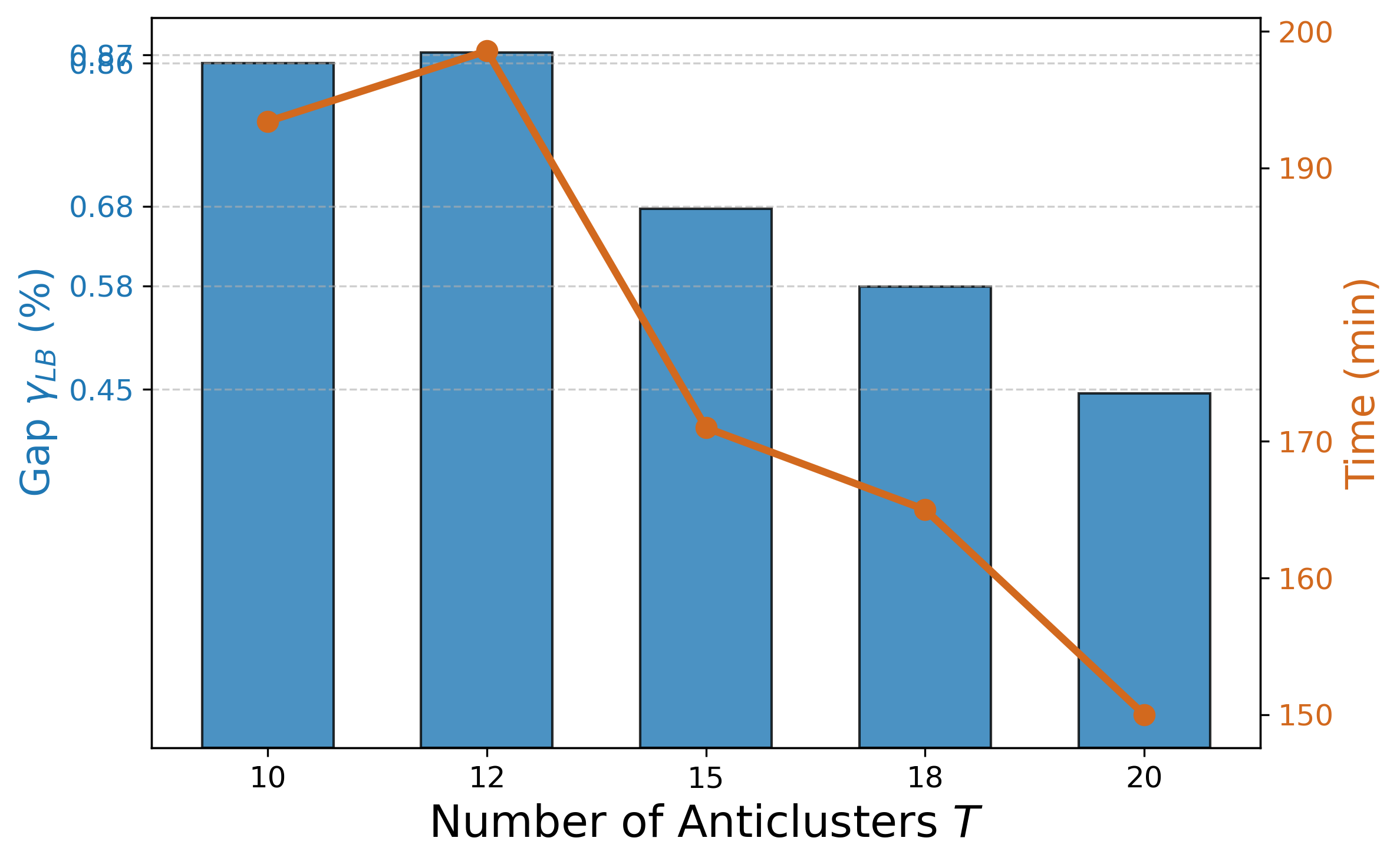}
        \caption{{StarLightCurves}}
        \label{fig:star}
    \end{subfigure}\\[0.47cm]
    \begin{subfigure}{0.4\textwidth}
        \includegraphics[width=\linewidth]{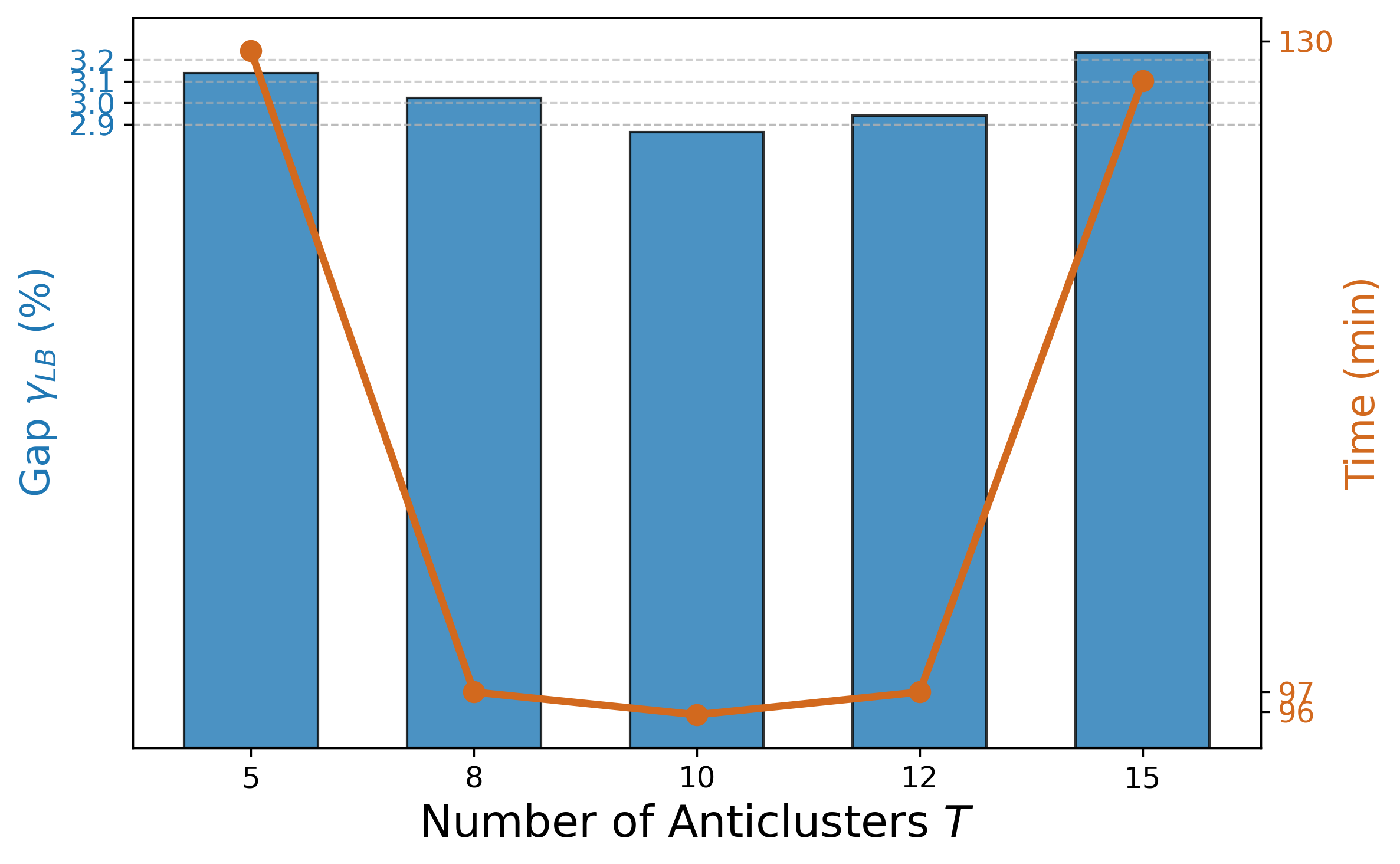}
        \caption{{TwoPatterns}}
        \label{fig:2patterns}
    \end{subfigure}
    \hspace{0.4cm}
    \begin{subfigure}{0.4\textwidth}
        \includegraphics[width=\linewidth]{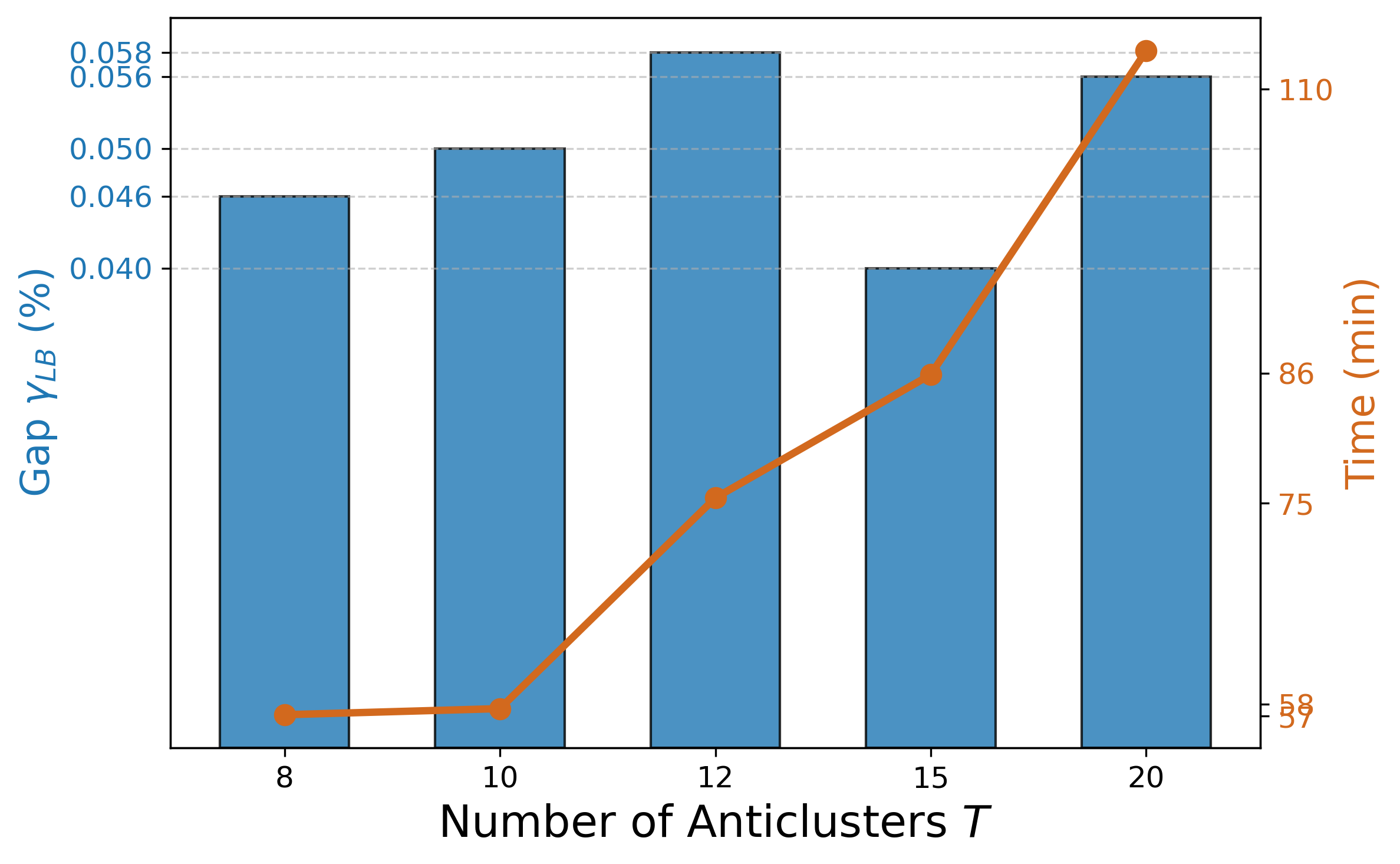}
        \caption{{Wafer}}
        \label{fig:wafer}
    \end{subfigure}
    \caption{{Performance comparison for different numbers of anticlusters. The bar chart represents the lower bound gap ($\gamma_{\textrm{\tiny LB}}$), while the orange line with markers indicates the total computation time in minutes.}}\label{fig:real_gaps}
\end{figure}

Figure~\ref{fig:real_gaps} further illustrates the relationship between $T$, the optimality gap $\gamma_{\textrm{\tiny LB}}$, and the total running time.
Across most datasets, the variation of $\gamma_{\textrm{\tiny LB}}$ with respect to $T$ is limited, indicating that the method is generally robust to the choice of the number of anticlusters.
{
In contrast, the \textit{Facebook} dataset shows a higher sensitivity to $T$ due to the poor representation of its smallest cluster within the anticlusters.
As $T$ increases, only a small number of the points in the smallest cluster is assigned to each anticluster, reducing the ability of the procedure to preserve the original clustering structure. Consequently, while the lower bound remains valid, its quality deteriorates.
Although other datasets such as \textit{Pulsar} also exhibit substantial cluster-size imbalance, their smallest clusters remain sufficiently represented in every anticluster for
the values of $T$ considered.
}

{
In summary, the choice of $T$ should balance tractability and representativeness: each anticluster should remain small enough to allow the efficient computation of the corresponding MSSC lower bound, while containing a sufficient number of points from every cluster so as to preserve the structure of the original clustering solution.
}

\section{Conclusions}  \label{sec:conclusions}
{In this paper, we present a novel methodology for validating feasible MSSC solutions, specifically designed to address a gap in the literature: the absence of certified optimality guarantees for large-scale instances. While the current state of the art relies heavily on heuristic algorithms to generate feasible solutions, these methods inherently only provide upper bounds on the optimal objective value. Our research introduces a framework that computes valid lower bounds, which are typically unavailable for large-scale instances.
We leverage the conceptual inverse of traditional clustering, i.e., the anticlustering problem, which involves partitioning a set of objects into groups characterized by high intra-group dissimilarity and low inter-group dissimilarity. This auxiliary problem serves as a guide for a divide-and-conquer strategy, allowing us to decompose the original, computationally intensive problem into smaller, more tractable subproblems. These subproblems are generated through a specialized heuristic method named AVOC.
The integration of any heuristic solution with our computed lower bound immediately produces a certified optimality gap.}

We conducted experiments on synthetic instances with 10,000 points and real-world datasets ranging from 4,000 to 18,000 points. The computational results demonstrate that the proposed method effectively provides an optimality measure for very large-scale instances. The optimality gaps, defined as the percentage difference between the MSSC of the clustering solution and the lower bound on the optimal value, remain below 3\% in all cases. However, many instances exhibit significantly smaller gaps, while computational times remain manageable, averaging around two hours. To the best of our knowledge, no other method exists that can produce (small) optimality gaps on large-scale instances.

While our approach can in principle be applied to datasets of any size, the current implementation has been tested on instances with up to about 18,000 points. Scaling to datasets with hundreds of thousands or millions of observations remains challenging, as the tractability of the subproblems becomes the limiting factor. In such settings, a large number of anticlusters $T$ is required to keep each subproblem manageable, which may in turn deteriorate the quality of the lower bound, since each subproblem contributes less effectively to the global bound. Future improvements in this direction may come from more powerful exact solvers for medium-sized MSSC instances and from enhanced parallelization strategies.
{
A further limitation of the proposed methodology lies in the dependence of the bound quality on how well the clustering structure is preserved across the anticlusters. As highlighted by the theoretical analysis in Section~3, the gap is driven by the discrepancy between the global centroids and those induced within the anticlusters. When clusters are well separated, this discrepancy is small, and the resulting bounds are tight. However, in more challenging settings, such as datasets with overlapping clusters or high noise levels, this condition may not hold, leading to weaker bounds. This effect is particularly critical in the presence of different cluster sizes, as smaller clusters may be poorly represented within the anticlusters. Although the validity of the lower bound is not affected, its quality may deteriorate in such cases.

These observations naturally suggest some directions for future research. A primary objective is to further enhance the AVOC procedure used to construct the anticlusters.}
In addition, it is important to better understand the trade-off between the quality of the lower bound and the computational effort required to obtain it. Finally, extending the approach to constrained clustering settings represents a promising direction for broadening its applicability.

\section*{Acknowledgments}
The work of Anna Livia Croella and Veronica Piccialli has been supported by PNRR MUR project PE0000013-FAIR.
This manuscript reflects only the authors’ views and opinions, neither the European Union nor the European Commission can be considered responsible for them.

\section*{Declaration of interest}
The authors declare that they have no known competing financial interests or personal relationships that could have appeared to influence the work reported in this paper.

\small
\bibliography{biblio}

\end{document}